\definecolor{emerald}{rgb}{0.31, 0.78, 0.47}
\definecolor{bleudefrance}{rgb}{0.19, 0.55, 0.91}
\definecolor{brandeisblue}{rgb}{0.0, 0.44, 1.0}
\definecolor{forestgreen}{rgb}{0.13, 0.55, 0.13}
\crefname{theorem}{Theorem}{Theorems}
\crefname{thm}{Theorem}{Theorems}
\crefname{lemma}{Lemma}{Lemmas}
\crefname{lem}{Lemma}{Lemmas}
\crefname{remark}{Remark}{Remarks}
\crefname{prop}{Proposition}{Propositions}
\crefname{defn}{Definition}{Definitions}
\crefname{corollary}{Corollary}{Corollaries}
\crefname{conjecture}{Conjecture}{Conjectures}
\crefname{question}{Question}{Questions}
\crefname{chapter}{Chapter}{Chapters}
\crefname{section}{Section}{Sections}
\crefname{figure}{Figure}{Figures}
\crefname{example}{Example}{Examples}
\theoremstyle{plain}
\newtheorem{thm}{Theorem}[section]
\newtheorem{lem}[thm]{Lemma}
\newtheorem{prop}[thm]{Proposition}
\newtheorem*{clm*}{Claim}
\theoremstyle{definition}
\theoremstyle{remark}
\newtheorem{remark}[thm]{Remark}
\numberwithin{equation}{section}
\renewcommand{\P}{\mathbb P}
\newcommand{\E}{\mathbb E}
\newcommand{\Z}{\mathbb Z}
\newcommand{\bbH}{\mathbb H}
\newcommand{\p}{\mathbb{P}}
\newcommand{\eps}{\varepsilon}
\newcommand{\bra}[1]{\left(#1\right)}
\newcommand{\sqbra}[1]{\left[#1\right]}
\newcommand{\den}[1]{\left\lVert#1\right\rVert}
\newcommand{\abs}[1]{\left\lvert#1\right\rvert}
\def\P{\mathbb{P}}
\DeclareMathSymbol{\leqslant}{\mathalpha}{AMSa}{"36} 
\DeclareMathSymbol{\geqslant}{\mathalpha}{AMSa}{"3E} 
\DeclareMathSymbol{\eset}{\mathalpha}{AMSb}{"3F}     
\renewcommand{\epsilon}{\varepsilon}
\tikzset{nomorepostaction/.code=\let\tikz@postactions\pgfutil@empty}
\title{\bf Double-exponential susceptibility growth in Dyson's hierarchical model with $|x-y|^{-2}$ interaction}
\renewenvironment{abstract}
 {\par\noindent\textbf{\abstractname.}\ \ignorespaces}
 {\par\medskip}
\author{{\bf Philip Easo$^*$, Tom Hutchcroft$^*$, and Jana Kurrek$^\dagger$}}
\begin{document}

\date{\small{\today}}

\maketitle

\setstretch{1.1}

\begin{abstract}
We study long-range percolation on the $d$-dimensional hierarchical lattice, in which each possible edge $\{x,y\}$ is included independently at random with inclusion probability $1-\exp ( -\beta \norm{ x-y }^{-d-\alpha} )$, where $\alpha>0$ is fixed and $\beta\geq 0$ is a parameter. This model is known to have a phase transition at some $\beta_c<\infty$ if and only if $\alpha<d$. We study the model in the regime $\alpha \geq d$, in which $\beta_c=\infty$, and prove that the
susceptibility $\chi(\beta)$ (i.e.,  the expected volume of the cluster at the origin) satisfies
\[
    \chi(\beta) =
    \begin{cases}
        \beta^{\frac{d}{\alpha - d } - o(1)} \quad &\text{if $\alpha > d$,}\\
        e^{e^{ \Theta(\beta) }} \quad &\text{if $\alpha = d$}
    \end{cases}
     \qquad \text{ as $\beta \uparrow \infty$.}
\]
 This resolves a problem raised by Georgakopoulos and Haslegrave (2020), who showed that $\chi(\beta)$ grows between exponentially and double-exponentially when $\alpha=d$. Our results imply that analogous results hold for a number of related models including Dyson's hierarchical Ising model, for which the double-exponential susceptibility growth we establish appears to be a new phenomenon even at the heuristic level.
\end{abstract}

\section{Introduction}


\emph{Hierarchical models} are toy models of statistical mechanics that exhibit similar phenomena to their Euclidean counterparts but which are much easier to study thanks to their exact recursive nesting structure. 
First introduced by Dyson \cite{dyson1969existence} in 1969, there is now a huge literature on hierarchical models within mathematical and theoretical physics, with Dyson's original paper having over 1000 citations; we refer the reader to \cite{dragovich2017p,dragovich2009p} for broad overviews of the use of hierarchical models in physics and \cite{bleher1987critical,MR3969983} for surveys of the rigorous analysis of critical phenomena in hierarchical models. Beyond their use in physics, hierarchical models have also been used to study epidemic spread \cite{ouboter2016stochastic} and population dynamics \cite{sawyer1983isolation}, where they may arguably be more realistic than either Euclidean or mean-field models.

 In this paper we study the low-temperature behaviour of hierarchical models \emph{at and below their lower-critical dimensions}, where phase transitions do not occur, a subject that has received relatively little prior treatment in the literature. We will see that the model displays particularly interesting behaviour at the lower-critical dimension itself, where it enjoys certain exact self-similarity properties.
  We focus on hierarchical \emph{percolation}, with our results immediately implying analogous results for various other models including the Ising and Potts models by standard stochastic domination properties.




Let us now define the model.
 Given a dimension $d \in \mathbb N$ and a side-length $L \in \mathbb N$ with $L \geq 2$, the hierarchical lattice $\mathbb H_L^d$ is the group $\bigoplus_{i=1}^{\infty} (\mathbb Z / L \mathbb Z)^d$ equipped with the ultrametric given by $\norm{x-y} := L^{\max \{i :\;  x_i \not= y_i \} }$ for all distinct $x,y \in \mathbb H_L^d$. (This metric is \emph{not} a norm, but we use this notation to emphasise its analogy with the metrics on $\mathbb Z^d$ induced by norms on $\mathbb R^d$.) 
The ultrametric balls of radius $L^n\mathbbm{1}(n>0)$ in this space are referred to as \textbf{$n$-blocks}, with the $n$-block containing the origin denoted by $\Lambda_n$. As a metric space, $\bbH^d_L$ can also be constructed recursively by taking
$\Lambda_0 = \{0\}$ and, for each $n\geq 0$, taking $\Lambda_{n+1}$ to be the union of $L^d$  disjoint copies of $\Lambda_n$ with distances defined by
$\|x-y\| = L^{n+1}$
 for each pair $x, y \in \Lambda_{n+1}$ belonging to distinct copies of $\Lambda_n$.
Given parameters $\alpha, \beta > 0$, we form a random graph $\omega$ with vertex set $\mathbb H_L^d$ by independently including each possible edge $xy:=\{x,y \}$ with probability $1-\exp ( -\beta \norm{ x-y }^{-d-\alpha} )$. We call this model \textbf{long-range percolation on the hierarchical lattice}. We denote its law by $\mathbb P_{\beta}$, omitting $\alpha$ because we typically think of it  as being fixed while $\beta$ varies. 


We are primarily interested in the geometry of the connected components of the random graph $\omega$, called \emph{clusters}. We write $K_x=K_x(\omega)$ for the cluster containing the element $x$, $x \leftrightarrow y$ to mean that $K_x = K_y$, and $x \leftrightarrow \infty$ to mean that $K_x$ is infinite. (Note that all these notions depend on the random graph $\omega$, but we suppress this from our notation when doing so does not cause confusion.) It is known that the \textbf{critical parameter} 
$\beta_c := \sup \{ \beta : \mathbb P_{\beta} ( o \leftrightarrow \infty ) = 0 \}$ is finite if and only if $d>\alpha$ \cite{dyson1969existence,dawson2013percolation,koval2012long}, so that $d=\alpha$ may be thought of as the \emph{lower-critical dimension} of the model. Since many of the most interesting questions about the model concern its behaviour at and near $\beta=\beta_c$, previous works have naturally focused on the case $0<\alpha<d$, where there is now a fairly good understanding of the model's critical behaviour \cite{hutchcroft2021critical,hutchcroft2022critical,koval2012long}.


In this paper we instead study
 the case $\alpha \geq d$, in which $\beta_c=\infty$. Although the model does not have a phase transition in this regime, the dependence of the model on the parameter $\beta$ remains very interesting. This is particularly true in the marginal case $\alpha=d$, where the model enjoys a certain exact self-similarity property as explained in \cref{sec:renormalization}.  To study this dependence on $\beta$, we focus in particular on the rate of divergence of the \textbf{susceptibility} $\chi(\beta):= \mathbb E_{\beta} \abs{K_0}$ 
  of the model, i.e.\ the expected 
   size of the cluster of the origin. 
    The susceptibility $\chi(\beta)$ is finite if and only if $\beta<\beta_c$ by sharpness of the phase transition \cite{MR894398,1901.10363,duminil2015new}, so that $\chi(\beta)$ blows up for finite values of $\beta$ if and only if $\alpha<d$. As such, it is plausible that the marginal case $\alpha=d$, where the model ``almost'' has a phase transition, might be characterized by $\chi(\beta)$ growing much faster as $\beta\to \infty$ when $\alpha=d$ than when $\alpha>d$. 
Indeed, the rapid growth of the susceptibility in the case $\alpha=d$ 
was previously studied by
 Georgakopoulos and Haslegrave\footnote{Interestingly, these authors had their own motivations to study a model equivalent to hierarchical percolation with $d=\alpha$, and were not aware of the previous literature on hierarchical models in physics.} \cite{georgakopoulos2020percolation}, who
   proved that $e^{\Omega(\beta)} \leq \chi(\beta) \leq e^{e^{O(\beta)}}$ and suggested, based on numerical simulations, that the true growth might be of the form $e^{\Theta(\beta \log \beta)}$.


    Our main result states, surprisingly, that the susceptibility is in fact double-exponential in $\beta$ when $\alpha =d$, completely resolving \cite[Problem 8.1]{georgakopoulos2020percolation}. We also show that it grows as a power of $\beta$ when $\alpha>d$, so that there is indeed a striking quantitative distinction between the two cases.

\begin{thm} \label{thm:main}
Let $d \geq 1$ and $L \geq 2$ be integers, let $\alpha \geq d$, and consider long-range percolation on the hierarchical lattice $\bbH^d_L$ in which each two vertices are connected by an edge with probability $1-\exp(-\beta\|x-y\|^{-d-\alpha})$. Then
\[
    \chi(\beta) =
    \begin{cases}
        e^{e^{ \Theta(\beta) }} \quad &\text{if $\alpha = d$}\\
                \beta^{\frac{d}{\alpha - d } - o(1)} \quad &\text{if $\alpha > d$}
    \end{cases}
\]
as $\beta \to \infty$.
\end{thm}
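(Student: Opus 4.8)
The plan is to exploit the exact recursive structure of the hierarchical lattice to set up a renormalization recursion for the susceptibility, or more precisely for a closely related quantity that sees only the cluster of the origin within the $n$-block $\Lambda_n$. Write $\chi_n(\beta) := \mathbb E_\beta |K_0 \cap \Lambda_n|$ for the expected size of the piece of the origin's cluster inside $\Lambda_n$, so that $\chi(\beta) = \lim_{n\to\infty} \chi_n(\beta)$. The block $\Lambda_{n+1}$ consists of $L^d$ copies of $\Lambda_n$, and every pair of vertices in distinct sub-blocks is joined by an edge of probability $1-\exp(-\beta L^{-(n+1)(d+\alpha)})$; summing over the $L^{2d(n+1) - d}$ such pairs between a fixed sub-block and the rest, the \emph{expected number} of long edges out of a given $n$-sub-block is $p_n := \beta L^{-(n+1)(d+\alpha)} \cdot L^{d} \cdot L^{dn}\cdot(L^{d}-1) \approx \beta L^{d} L^{-(n+1)\alpha}$ (up to constants), which is summable in $n$ precisely when $\alpha>0$, but its total mass over a \emph{single coarse-graining step} behaves very differently when $\alpha=d$ versus $\alpha>d$. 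The first step is to make this bookkeeping precise and derive a clean recursive inequality of the form $\chi_{n+1} \le \chi_n \cdot \bigl(1 + c\,\beta L^{-n\alpha}\chi_n\bigr)$ (and a matching lower bound), where the quadratic term $\chi_n^2$ reflects that a long edge out of the origin's sub-block connects to a fresh independent copy of the cluster on the other side.

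Granting such a recursion, the two cases are then just two regimes of the same difference inequality. When $\alpha>d$, one checks that the recursion stabilizes: $\chi_n$ increases by a geometrically shrinking multiplicative factor and converges to a finite limit $\chi(\beta)$, and optimizing over the ``crossover scale'' $n^* \approx \frac{1}{\alpha}\log_L\beta$ (the scale at which $\beta L^{-n\alpha}\chi_n$ drops below $1$) yields $\chi(\beta) = \beta^{d/(\alpha-d)-o(1)}$; the $o(1)$ absorbs the polylogarithmic slack coming from the fact that the recursion is only an inequality and from the contribution of small scales. When $\alpha=d$, the factor $L^{-n\alpha}\chi_n$ no longer decays on its own, and the recursion $\chi_{n+1}\approx \chi_n(1+c\beta L^{-nd}\chi_n)$ is genuinely a discrete logistic/Riccati-type blow-up: iterating, $\log\chi_n$ grows, and one expects $\log \chi_{n}$ to roughly double each time $n$ increases past the current ``doubling scale'', producing $\chi(\beta) = \exp\exp(\Theta(\beta))$. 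The upper bound here requires care because naive iteration of the inequality $\chi_{n+1}\le\chi_n(1+c\beta L^{-nd}\chi_n)$ could a priori diverge; one must show that it does \emph{not}, i.e.\ that $\chi(\beta)<\infty$ (consistent with $\beta_c=\infty$) and extract the correct double-exponential rate, presumably by comparing with the continuous ODE $\dot y = c\beta e^{-ds}y^2$ whose solution exists globally and is double-exponentially large in $\beta$ at $s=0$ after running from $s=+\infty$, or equivalently by a direct telescoping estimate on $\log\log\chi_n$.

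The lower bounds are the technically softer direction: for $\alpha=d$ one constructs an explicit high-probability connected structure inside $\Lambda_n$ by a second-moment or sprinkling argument at each scale, showing that with probability bounded below the origin's cluster reaches size $\exp\exp(\Omega(\beta))$; for $\alpha>d$ one similarly builds a cluster of size $\beta^{d/(\alpha-d)-o(1)}$. These can likely be organized as a single ``greedy exploration across scales'' in which, at scale $n < n^*$, the current cluster is large enough that with good probability it emits a long edge to a fresh copy of itself, roughly doubling its size and advancing the scale; the product telescopes to the claimed bound. The main obstacle, and where I expect the real work to lie, is the \emph{upper bound in the case $\alpha=d$}: controlling the self-reinforcing quadratic recursion tightly enough to get the constant in the exponent right (i.e.\ $\Theta(\beta)$ rather than, say, $\Theta(\beta\log\beta)$ as the numerics had suggested) requires showing that correlations between the many long edges emitted at different scales do not inflate the growth, which means the recursion for $\chi_n$ must be shown to be not merely an upper bound of logistic type but essentially tight, with the error terms genuinely lower-order. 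I would attack this by a careful analysis of the generating-function / Riccati recursion for $\chi_n$ together with a BK-type inequality to rule out the relevant over-counting, and then match it against the lower-bound construction to pin down the $\Theta$.
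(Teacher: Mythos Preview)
Your renormalization framework is right and essentially matches the paper for $\alpha > d$, but for $\alpha = d$ you have inverted the difficulty: the double-exponential \emph{upper} bound was already known (Georgakopoulos--Haslegrave), and the new content of the theorem is the matching \emph{lower} bound, which you dismiss as ``technically softer.'' The numerical guess $e^{\Theta(\beta\log\beta)}$ has to be ruled out from \emph{below}, not above. Your proposed upper-bound recursion also has a concrete gap: what a BK-type argument actually yields is $\chi_{n+1}\le \chi_n + c\beta L^{-nd}\chi_n\,\chi_{n+1}$, i.e.\ $\chi_{n+1}(1-g_n)\le\chi_n$ with $g_n:=c\beta L^{-nd}\chi_n$, and this is vacuous until $g_n<1$. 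Showing that $g_n$ drops below $1$ at some scale $n=e^{O(\beta)}$ is the entire content of the upper bound, and the recursion does not bootstrap it. (Your ODE $\dot y=c\beta e^{-ds}y^2$ with $y(0)=1$ in fact blows up at finite $s$ whenever $c\beta>d$, which is a symptom of exactly this failure.) The paper instead bounds $\P_\beta(0\leftrightarrow\Lambda_n^c)$ directly by an exploration tracking the successive radial jumps of the cluster, and then feeds this into a Duminil-Copin--Tassion correlation-length argument.

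For the $\alpha=d$ lower bound, your ``greedy exploration doubling the cluster at each scale'' is the right heuristic but misses the mechanism that produces double-exponential rather than merely exponential growth (which is all the earlier work extracted from essentially this idea). The paper's argument is a \emph{sprinkled} renormalization that passes from scale $n$ to scale $2n$ (not $n$ to $n+1$) while increasing $\beta$ by a factor $1+O(\varepsilon)$: doubling the scale is what makes the block side-length grow doubly-exponentially in the number of induction steps, and the sprinkling is needed to compensate the $(1-\varepsilon)^2$ loss in effective $\beta$ incurred by keeping only the large sub-block clusters when coarse-graining. Establishing a suitable base case---finding a scale $n_0\asymp\beta$ at which the largest cluster fills a $1-\delta$ fraction of $\Lambda_{n_0}$ with probability at least $1-\delta$, for the \emph{same} small $\delta$ appearing as both the density defect and the site-percolation parameter---is itself delicate and does not follow from a generic second-moment argument.
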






\begin{remark}
The same self-similarity property that makes the $\alpha=d$ case particularly interesting from our perspective also leads the model's Euclidean ($\Z^d$) counterpart to have a rich and fractal-like large-scale geometry in the supercritical regime, with fractal dimension depending on the parameter~$\beta$ \cite{ding2013distances,baumler2022distances,baumler2022behavior}. The Euclidean model with $\alpha=d=1$ is also very interesting as an example of a percolation model undergoing a \emph{discontinuous} phase transition \cite{aizenman1988discontinuity,duminil2020long}, meaning that the close analogy between long-range percolation on the hierarchical and one-dimensional Euclidean lattices that holds for $\alpha<d$ \cite{MR4504407,baumler2022isoperimetric} breaks down rather badly at the point $\alpha=d$.
\end{remark}


\medskip

\noindent \textbf{Consequences for other models.}
\cref{thm:main} immediately implies that analogous estimates hold for a large number of related models that are stochastically dominated above and below by Bernoulli percolation of appropriate parameters.
For example, the random cluster model on $\bbH^d_L$ with parameter $q\geq 1$, which in finite volume is defined by weighting the law of the Bernoulli percolation model we consider by a factor proportional to $q^{\#\text{clusters}}$, is always stochastically dominated by Bernoulli-$\beta$ percolation and stochastically dominates Bernoulli-$(\beta/q)$ percolation. It follows in particular that if $\chi(q,\beta)$ is the susceptibility (i.e., the expected size of the cluster of the origin) of the model with $\alpha=d$, then there exist positive constants $c$, $C$, and $\beta_0$ such that
\begin{equation}
e^{e^{\frac{c}{q} \beta}} \leq \chi(q,\beta) \leq e^{e^{C \beta}}
\end{equation}
for every $q\geq 1$ and $\beta\geq \beta_0$. (Note that for $\alpha\geq d$ the susceptibility can be defined without reference to boundary conditions since there is no phase transition and the Gibbs measure is always unique.) Using the Edwards-Sokal \cite{edwards1988generalization} coupling between the random cluster model and the Potts model when $q\geq 2$ is an integer, which identifies the susceptibilities of the two models, it follows that the same susceptibility estimates hold for the hierarchical Potts with interaction $J(x,y)=\|x-y\|^{-d-\alpha}$ for $\alpha =d$, and in particular to 
%
   Dyson's  hierarchical Ising model \cite{dyson1969existence} on $\bbH^1_2$ with interaction $|x-y|^{-2}$. Detailed background on these models and their relation to percolation can be found in \cite{MR2243761}. This striking double-exponential growth does not appear to have been discovered previously in any of these  models, even at a heuristic level.

\medskip



\noindent \textbf{About the proof.}  The proofs of the two cases $\alpha=d$ and $\alpha>d$ are very different, with the case $\alpha=d$ being much more delicate due to the model's resulting special self-similarity properties. The remainder of the paper is summarized as follows:
\begin{itemize}
\item In \cref{sec:renormalization} we introduce the renormalization framework that we use and give a very simple proof of the upper bound $\chi(\beta)=O(\beta^{d/(\alpha-d)})$ in the case $\alpha>d$. For the case $\alpha=d$, the most important idea introduced in this section is that by working with a certain mixed site-and-bond model, we can control the behaviour of percolation on large scales in terms of percolation on smaller scales, but with a change of parameters that depends on the size of the largest clusters on the smaller scale.


\item In \cref{sec:alpha>d_lower} we complete the proof of the $\alpha>d$ case of \cref{thm:main} by proving an appropriate lower bound on $\chi(\beta)$ in this case. The proof of the lower bound is based on a modification of an induction-on-scales argument that the second author introduced in \cite{hutchcroft2021critical} to study the $\alpha < d$ regime; a more quantitative implementation of this argument is required to get a non-vacuous output in the case $\alpha>d$.

 \item In \cref{sec:alpha=d} we prove the $\alpha=d$ case of \cref{thm:main}. The proof of the lower bound, which is the primary new contribution of our paper, relies on a technique we call \emph{sprinkled renormalisation}: We use the renormalization technology introduced in \cref{sec:renormalization} to do an induction-on-scales in which we slightly increase the parameter $\beta$ each time we renormalize, taking care to not do this so many times that we increase $\beta$ to more than twice its original value. One interesting feature of this proof is that we \emph{double} the scale at each step of the induction, so that the side length of the block we consider grows doubly-exponentially in the number of steps taken; this turns out to make things work particularly nicely thanks to the self-similarity of the model. Finally, to keep the paper self-contained, in \cref{sec:alpha=d_upper} we give a new proof of the double-exponential upper bound of \cite{georgakopoulos2020percolation} based on the notion of \emph{correlation length} for hierarchical models introduced in \cite{MR4462652}.
\end{itemize}

\section{The basic renormalisation framework}
\label{sec:renormalization}
In this section, we develop notation to describe how to control the percolation process at a given scale by the process at a smaller scale with a different effective parameter. Along the way we will deduce the upper bounds of \cref{thm:main} in the case $\alpha>d$.

\medskip

\textbf{Blocks and their edges.}
Let $d \geq 1$ and $L \geq 2$ be integers, and let $\alpha > 0$ be a constant. For each integer $n \geq 0$ we refer to the ultrametric balls of radius $L^n \mathbbm{1}(n>0)$ in $\bbH^d_L$ as \textbf{$n$-blocks}. For each $x\in \bbH^d_L$ and $n\geq 0$ we write $\Lambda_n(x)$ for the $n$-block containing $x$ and write $\Lambda_n=\Lambda_n(0)$ for the $n$-block containing the origin. In other words, $\Lambda_n$ is the subset of $\mathbb H_L^d$ 
consisting of those $x$ with $x_i = 0$ for all $i > n$. We write $E_n(x)$ for the set of unordered pairs of distinct elements of $\Lambda_n(x)$, write $E=\bigcup_{n\geq 1} E_n$ for the set of unordered pairs of distinct elements of $\bbH^d_L$, and write $E_n=E_n(0)$.
 We also write $F_k=\bigcup_{x\in \bbH^d_L}E_k(x)$ for the set of all unordered pairs of distinct elements with distance at most $L^k$.
%
%
%
 \medskip

\textbf{Block renormalisation.}
 We define $\pi : \mathbb H_L^d \to \mathbb H_L^d$ to be the \emph{left-shift} map defined by $(x_1,x_2,\dots) \mapsto (x_2,x_3,\dots)$ and define $\Phi : \{ 0,1 \}^E \to \{ 0,1\}^E$ mapping $\omega \mapsto \Phi[\omega]$ by setting $\Phi[\omega](e) =1$ if and only if there exists $xy \in E$ such that $\pi(x)\pi(y) = e$ and $\omega_{xy}=1$. This corresponds to zooming out by one scale, treating each copy of $\Lambda_1$ as a single vertex. 
In particular, if $x,y\in \bbH^d_L$ are connected in a configuration $\omega \in \{0,1\}^E$ then we must also have that $\pi(x)$ and $\pi(y)$ are connected in $\Phi[\omega]$, since any open path connecting $x$ and $y$ is mapped to an open path connecting $\pi(x)$ to $\pi(y)$. (The converse does not always hold.)
 \cref{lem:zoom} states that the effect of the map $\Phi$ on long-range percolation is simply to adjust the parameter $\beta$. Notice that when $\alpha = d$ the model is self-similar in the sense that $\beta$ remains unchanged. 

\begin{lem} \label{lem:zoom}
    For all $\beta > 0$, the pushfoward $\Phi_* \p_\beta$ is given by $\Phi_* \p_\beta = \p_{L^{d-\alpha} \beta}$. That is, if $\omega$ has law $\P_\beta$ then $\Phi[\omega]$ has law  $\P_{L^{d-\alpha}\beta}$.
\end{lem}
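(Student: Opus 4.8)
The plan is to compute the law of $\Phi[\omega]$ one edge at a time and then invoke independence. Fix an unordered pair $e=\{u,v\}$ of distinct elements of $\bbH^d_L$ and write $\|u-v\|=L^m$, so that $m\geq 1$. First I would describe the set $S_e:=\{xy\in E:\pi(x)\pi(y)=e\}$ of edges that get ``collapsed onto'' $e$. Since $\pi$ is the left-shift, the fibre $\pi^{-1}(u)=\{(t,u_1,u_2,\dots):t\in(\Z/L\Z)^d\}$ has exactly $L^d$ elements, and likewise $|\pi^{-1}(v)|=L^d$; as $u\neq v$ these two fibres are disjoint, so $S_e$ consists of exactly $L^{2d}$ pairs. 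Moreover every $xy\in S_e$ satisfies $\max\{i:x_i\neq y_i\}=1+\max\{i:u_i\neq v_i\}$, hence $\|x-y\|=L\,\|u-v\|=L^{m+1}$.

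Next comes the key computation. By definition $\Phi[\omega](e)=0$ if and only if every edge in $S_e$ is closed in $\omega$, so independence of the edges under $\P_\beta$ gives
\[
\P_\beta\big(\Phi[\omega](e)=0\big)=\prod_{f\in S_e}\exp\!\big(-\beta L^{-(m+1)(d+\alpha)}\big)=\exp\!\big(-\beta\,L^{\,2d-(m+1)(d+\alpha)}\big).
\]
Now the exponent arithmetic does everything: $2d-(m+1)(d+\alpha)=(d-\alpha)-m(d+\alpha)$, so $L^{\,2d-(m+1)(d+\alpha)}=L^{d-\alpha}\,(L^m)^{-(d+\alpha)}=L^{d-\alpha}\|u-v\|^{-d-\alpha}$, and therefore
\[
\P_\beta\big(\Phi[\omega](e)=0\big)=\exp\!\big(-L^{d-\alpha}\beta\,\|u-v\|^{-d-\alpha}\big),
\]
which is exactly the probability that $e$ is closed under $\P_{L^{d-\alpha}\beta}$.

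To finish, I would note that the sets $S_e$ are pairwise disjoint as $e$ ranges over unordered pairs of distinct elements of $\bbH^d_L$ — indeed they partition the set of pairs $xy\in E$ with $\pi(x)\neq\pi(y)$, and every such $e$ actually occurs since $\pi$ is surjective — while $\Phi[\omega](e)$ depends on $\omega$ only through its values on $S_e$. Since the coordinates of $\omega$ are independent under $\P_\beta$, the family $\big(\Phi[\omega](e)\big)_e$ is therefore independent with the marginals computed above, i.e.\ $\Phi_*\P_\beta=\P_{L^{d-\alpha}\beta}$.

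There is no real obstacle here; the only thing to get right is the bookkeeping that the $L^{2d}$ edges merged into a single renormalised edge, together with the rescaling of lengths by the factor $L$, combine to multiply $\beta$ by precisely $L^{d-\alpha}$. This is exactly why the model is parametrised by $1-\exp(-\beta\|x-y\|^{-d-\alpha})$ rather than $\beta\|x-y\|^{-d-\alpha}$: it converts the product of edge-closure probabilities over $S_e$ into the exponential of a sum, so that the renormalisation map acts on the parameter exactly rather than only approximately.
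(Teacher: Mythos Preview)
Your proof is correct and follows essentially the same approach as the paper's own proof: both count the $L^{2d}$ preimage edges at distance $L\|u-v\|$, take the product of their closure probabilities, and observe that this yields the closure probability under $\P_{L^{d-\alpha}\beta}$. You are simply more explicit than the paper about the exponent arithmetic and about why the disjointness of the fibres $S_e$ gives independence (the paper just says ``independence is immediate''), but the argument is the same.
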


\begin{proof}
Independence is immediate, so it suffices to check that $\p_{L^{d-\alpha} \beta}$ has the correct marginals. Let $xy \in E$ be arbitrary. There are $L^{2d}$ edges $x'y' \in E$ with $\pi(x')\pi(y') = xy$, and each has $\den{x'-y'} = L \den{x-y}$. The probability that $\omega_{xy} = 0$ under $\Phi_* \p_\beta$ is the probability that $\omega_{x'y'} = 0$ for every one of these edges $x'y'$ under $\p_\beta$, so that
\[
    \Phi_* \p_\beta \bra{ \omega_{xy} = 0 } = \sqbra{e^{-\beta \bra{L \den{ x-y } }^{ - d - \alpha} }}^{L^{2d}} = \p_{ L^{d-\alpha}\beta } \bra{ \omega_{xy} = 0 }
\]
as required.
\end{proof}

This observation already lets us prove the upper bound of \cref{thm:main} in the case $\alpha>d$.

\begin{lem} \label{lem:alpha>d_ub}
If $\alpha>d$ then there exists a constant $C=C(d,L,\alpha)< \infty$ such that
    $\chi(\beta) \leq C \beta^{ \frac{d}{\alpha -d} }$ for every $\beta \geq 1$.
\end{lem}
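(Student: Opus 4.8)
The plan is to iterate the renormalisation map $\Phi$ of \cref{lem:zoom}: when $\alpha>d$, each application of $\Phi$ multiplies the parameter by $L^{d-\alpha}<1$, so after $k$ steps the effective parameter is $L^{k(d-\alpha)}\beta$, which can be brought down to a constant by choosing $k$ of order $\log\beta$, at the cost of a multiplicative factor that works out to be $\beta^{d/(\alpha-d)+o(1)}$. Besides \cref{lem:zoom}, the only inputs are the elementary monotonicity of $\chi$ in $\beta$ and the a priori finiteness $\chi(1)<\infty$, which holds because $\beta_c=\infty$ when $\alpha\ge d$.

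The first step is a deterministic comparison: for every $k\ge 0$,
\[
\abs{K_0(\omega)}\ \le\ L^{kd}\,\abs{K_0(\Phi^k[\omega])}.
\]
Indeed, the $k$-fold left-shift $\pi^k$ fixes the origin and has all of its fibres of size exactly $L^{kd}$; and iterating the basic property of $\Phi$ recorded just before \cref{lem:zoom}, if $0\leftrightarrow x$ in $\omega$ then $0=\pi^k(0)\leftrightarrow\pi^k(x)$ in $\Phi^k[\omega]$. Grouping the vertices of $\bbH^d_L$ into the fibres of $\pi^k$ and bounding the number of vertices of $K_0(\omega)$ lying in each nonempty fibre by $L^{kd}$ yields the displayed inequality. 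Taking $\P_\beta$-expectations and applying \cref{lem:zoom} $k$ times, so that $\Phi^k[\omega]$ has law $\P_{L^{k(d-\alpha)}\beta}$, gives
\[
\chi(\beta)\ \le\ L^{kd}\,\chi\!\bigl(L^{k(d-\alpha)}\beta\bigr)\qquad\text{for all }k\ge 0.
\]

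Finally I would optimise over $k$. For $\beta\ge 1$, choosing $k=\bigl\lceil\log\beta/((\alpha-d)\log L)\bigr\rceil$ makes $L^{k(d-\alpha)}\beta\in\bigl(L^{d-\alpha},1\bigr]$, so $\chi\bigl(L^{k(d-\alpha)}\beta\bigr)\le\chi(1)$ by monotonicity, while $k\le \log\beta/((\alpha-d)\log L)+1$ gives $L^{kd}\le L^{d}\,\beta^{d/(\alpha-d)}$. Combining these yields $\chi(\beta)\le L^{d}\chi(1)\,\beta^{d/(\alpha-d)}$ for all $\beta\ge 1$, i.e.\ the claim with $C=L^{d}\chi(1)<\infty$.

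I do not expect a genuine obstacle here — this is the ``very simple'' part of \cref{thm:main}. The only points needing a little care are the bookkeeping in the deterministic fibre inequality (the shift map collapses each fibre of $L^{kd}$ vertices and cannot disconnect a pair, which forces both the factor $L^{kd}$ and the direction of the inequality) and the appeal to the a priori finiteness of $\chi$. Note that the whole argument degenerates at $\alpha=d$, where $\Phi$ leaves $\beta$ fixed and the recursion becomes the vacuous $\chi(\beta)\le L^d\chi(\beta)$; that borderline case genuinely requires the different and much more delicate analysis of \cref{sec:alpha=d}.
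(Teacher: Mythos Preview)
Your proof is correct and is essentially identical to the paper's: you use the same deterministic fibre inequality $|K_0(\omega)|\le L^{kd}|K_0(\Phi^k[\omega])|$, the same application of \cref{lem:zoom} to turn it into $\chi(\beta)\le L^{kd}\chi(L^{k(d-\alpha)}\beta)$, and the same choice of $k$ to reduce the parameter to at most $1$, arriving at the same constant $C=L^d\chi(1)$.
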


\begin{proof}
 Recall that $K_0(\omega)$ denotes the cluster of the origin in the configuration $\omega\in \{0,1\}^E$. For each configuration $\omega \in \{ 0,1\}^E$ we have that
\[
|K_0(\omega)| = |\{x\in \bbH^d_L: 0 \leftrightarrow x\}| \leq |\{x\in \bbH^d_L: \pi(0) \leftrightarrow \pi(x)\}| = L^d |K_0(\Phi[\omega])|,
\]
 where both sides may be infinite. Taking expectations and applying \cref{lem:zoom}, we deduce that 
 \[\chi(\beta) \leq L^d \cdot \chi\bigl(L^{-(\alpha-d)} \beta\bigr)\] for every $\beta > 0$ and hence by induction that
\[\chi(\beta) \leq L^{dn} \cdot \chi\bigl( L^{ -(\alpha-d)n } \beta \bigr)\] for every $\beta>0$ and $n \geq 0$. 
Taking $n=\lceil \frac{1}{\alpha-d}\log_L \beta\rceil $ to be minimal such that $L^{-(d-\alpha) n}\beta \leq 1$ and using that $\chi(\beta)$ is an increasing function of $\beta$, we deduce that
\[
    \chi(\beta) \leq L^{d \lceil \frac{1}{\alpha-d}\log_L \beta \rceil } \cdot \chi(1) \leq L^d \chi(1) \beta^{\frac{d}{\alpha-d}}
\] 
for every $\beta \geq 1$. The claim follows with $C=L^d \chi(1)$ since $\chi(\beta) < \infty$ for every $\beta<\beta_c=\infty$.
\end{proof}

We next discuss a variation on this renormalization procedure that can be used to prove lower bounds.

\medskip

\textbf{Renormalisation with a mixed site-bond model.}
If we zoom out by $k$ scales by iterating the map $\Phi$ for $k$ steps,
we lose all information about the configuration of edges
 in $F_k := \bigcup_{x\in \bbH^d_L}E_k(x)$, which join vertices at distance at most $L^k$. As we saw in \cref{lem:alpha>d_ub}, this is not necessarily a problem when proving upper bounds on our original model, where it may suffice to consider worst-case estimates in which every edge of $F_k$ is open.
  To establish non-trivial lower bounds, however, we will require more information about the state of the edges in $F_k$.
Rather than keep track of \emph{all} relevant information about these small-scale edges, we will instead define an appropriate notion of what it means for a block to be `good', and keep track only of which blocks are good when re-scaling. 
   Since the goodness of different $k$-blocks will be independent of each other and independent of the status of edges not belonging to $F_k$, this naturally leads us to consider a \emph{mixed} site-bond percolation model. 

   For each $p \in [0,1]$, let $\mathbb Q_p$ be the law of the random subset $\eta$ of $\mathbb H_L^d$ obtained by independently including each element with probability $p$. Given $p \in [0,1]$ and $\beta > 0$, let $\p_{\beta,p}$ be the law of a random subgraph of $(\mathbb H_L^d, E)$, encoded as an element of $\Omega=\{0,1\}^{\mathbb H_L^d} \times \{0,1\}^E$ formed as follows: independently sample $\eta \sim \mathbb Q_p$ and $\omega \sim \p_\beta$, and then take the graph with vertex set $\{x : \eta (x) = 1\}$ and edge set $\{ xy : \omega_{xy} = \eta_x = \eta_y =1 \}$. As usual, we will abuse notation to think of $\omega$ and $\eta$ equivalently as the sets $\{e:\omega(e)=1\}$ and $\{x:\eta(x)=1\}$. Given a set $A\subseteq \bbH^d_L$, we refer to the connected  components of the subgraph of this graph induced by $A\cap \eta$ as \textbf{$(\eta,\omega)$-clusters in $A$} and say that two vertices $x,y\in A$ are \textbf{$(\eta,\omega)$-connected} in $A$ if they are in the same $(\eta,\omega)$-cluster in $A$. That is, two points $x,y\in A$ are $(\eta,\omega)$-connected in $A$ if there exists a path connecting $x$ to  $y$ all of whose vertices belong to $A\cap \eta$ and all  of  whose edges belong to $\omega$.

We next introduce the notation for zooming out by $k$ scales while only retaining edges between \emph{large} clusters. 
Fix an enumeration of $\bbH^d_L=\{x_1,x_2,\ldots\}$. Given $(\eta,\omega)\in \Omega$ and a finite set $A\subseteq \bbH^d_L$, we define
$K_\mathrm{max}(A)=K_\mathrm{max}(A;(\eta,\omega))$ to be an $(\eta,\omega)$ cluster in $A$ of maximal volume, where if there is more than one cluster of maximal volume we break ties using the fixed enumeration of $\bbH^d_L$ by taking the cluster containing a vertex of minimal label among the different maximal volume clusters. (By `volume' we just mean cardinality.) To lighten notation, we also write
\[
K^\mathrm{max}_n=K_\mathrm{max}(\Lambda_n;(\eta,\omega)) \qquad \text{ and } \qquad K^\mathrm{max}_n(z)=K_\mathrm{max}(\Lambda_n(z);(\eta,\omega))
\]
for each $n\geq 0$ and $z\in \bbH^d_L$ when the choice of $(\eta,\omega)$ is unambiguous.
For each $\lambda>0$ and $k\geq1$ 
we define a map $\Psi^{\lambda,k} : \Omega \to \Omega$ by $\Psi^{\lambda,k}(\eta,\omega)=(\eta',\omega')$ where
\[
\eta'_x=\mathbbm{1}\Bigl(x=\pi^k(z) \text{ for some $z$ with $|K^\mathrm{max}_k(z)|\geq \lambda |\Lambda_k|$}\Bigr)
\]
and
\begin{multline*}
\omega'_{xy}=\mathbbm{1}\Bigl(x=\pi^k(z),y=\pi^k(w) \text{ for some $z,w$ with $\omega_{zw}=1$, }  \text{$z\in K^\mathrm{max}_k(z)$, and $w\in K^\mathrm{max}_k(w)$}\Bigr).
\end{multline*}
This function has the following important property.

\begin{lem}
\label{lem:Psi_connectivity}
Let $(\eta,\omega) \in \Omega$, let $k\geq 1$ and let $\lambda>0$.
If $x,y\in \bbH^d_L$ and $n\geq 1$ are such that $x \in K^\mathrm{max}_k(x)$, $y \in K^\mathrm{max}_k(y)$, and $\pi^k(x)$ is $\Psi^{\lambda,k}(\eta,\omega)$-connected to $\pi^k(y)$ in $\Lambda_n$ then $x$ and $y$ are $(\eta,\omega)$-connected in $\Lambda_{n+k}$.
In particular, 
\[\left|K_\mathrm{max}\left(\Lambda_{n+k};(\eta,\omega)\right)\right| \geq \lambda L^{dk}\left|K_\mathrm{max}\left(\Lambda_{n};\Psi^{\lambda,k}(\eta,\omega)\right)\right| \]
for every $n\geq 1$.
\end{lem}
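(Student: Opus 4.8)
The plan is to lift a $\Psi^{\lambda,k}(\eta,\omega)$-path in $\Lambda_n$ to an $(\eta,\omega)$-path in $\Lambda_{n+k}$, replacing each vertex of the coarse path by a large cluster sitting inside the $k$-block lying above it. The first step is to set up the combinatorics of $\pi^k$. From the coordinate description $\Lambda_k(z)=\{w:w_i=z_i\text{ for }i>k\}$ one sees immediately that $\pi^k$ is constant on each $k$-block and separates distinct $k$-blocks, so it restricts to a bijection between the set of $k$-blocks of $\Lambda_{n+k}$ and the vertex set of $\Lambda_n$; moreover the $k$-block lying over a vertex $v\in\Lambda_n$ is contained in $\Lambda_{n+k}$. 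For $v\in\Lambda_n$ I will write $B(v)$ for this $k$-block, and when $B(v)\cap\eta\neq\emptyset$ I will write $C(v):=K_\mathrm{max}(B(v);(\eta,\omega))$ for the selected maximal $(\eta,\omega)$-cluster inside $B(v)$; the observation that drives everything is that $\pi^k(z)=v$ forces $\Lambda_k(z)=B(v)$ and hence $K^\mathrm{max}_k(z)=C(v)$.

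Writing $(\eta',\omega')=\Psi^{\lambda,k}(\eta,\omega)$, I would then take a $(\eta',\omega')$-path $v_0=\pi^k(x),v_1,\dots,v_m=\pi^k(y)$ in $\Lambda_n$ and unpack what each vertex and edge of it says about $(\eta,\omega)$. Each $v_j\in\eta'$, so $C(v_j)$ is defined with $|C(v_j)|\ge\lambda|\Lambda_k|=\lambda L^{dk}$; the hypotheses $x\in K^\mathrm{max}_k(x)$ and $y\in K^\mathrm{max}_k(y)$ give $x\in C(v_0)$ and $y\in C(v_m)$; and for each $j<m$ the relation $\omega'_{v_jv_{j+1}}=1$ produces vertices $a_j,b_j$ with $\pi^k(a_j)=v_j$, $\pi^k(b_j)=v_{j+1}$, $\omega_{a_jb_j}=1$, $a_j\in K^\mathrm{max}_k(a_j)$ and $b_j\in K^\mathrm{max}_k(b_j)$, which by the observation above forces $a_j\in C(v_j)$ and $b_j\in C(v_{j+1})$, the edge $a_jb_j$ lying inside $\Lambda_{n+k}$ since $B(v_j),B(v_{j+1})\subseteq\Lambda_{n+k}$. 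Since each $C(v_j)$ is a connected subgraph of the graph induced by $\Lambda_{n+k}\cap\eta$, I can now concatenate an $(\eta,\omega)$-path from $x$ to $a_0$ inside $C(v_0)$, the edge $a_0b_0$, a path from $b_0$ to $a_1$ inside $C(v_1)$, the edge $a_1b_1$, and so on, finishing with a path from $b_{m-1}$ to $y$ inside $C(v_m)$ (degenerating to a single within-block path when $m=0$). This is an $(\eta,\omega)$-walk from $x$ to $y$ inside $\Lambda_{n+k}$, which is the first assertion.

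For the displayed inequality I would apply the first assertion to the maximal $(\eta',\omega')$-cluster $M:=K_\mathrm{max}(\Lambda_n;(\eta',\omega'))$ in $\Lambda_n$ (the bound being vacuous when $M=\emptyset$). Every $v\in M$ lies in $\eta'$, so $C(v)$ is defined with $|C(v)|\ge\lambda L^{dk}$; the clusters $\{C(v):v\in M\}$ are pairwise disjoint since they sit in distinct $k$-blocks; and applying the first assertion to all pairs of vertices of $M$ shows that $\bigcup_{v\in M}C(v)$ is contained in a single $(\eta,\omega)$-cluster of $\Lambda_{n+k}$. Hence
\[
\bigl|K_\mathrm{max}(\Lambda_{n+k};(\eta,\omega))\bigr|\ \ge\ \Bigl|\bigcup_{v\in M}C(v)\Bigr|\ =\ \sum_{v\in M}|C(v)|\ \ge\ |M|\cdot\lambda L^{dk}\ =\ \lambda L^{dk}\,\bigl|K_\mathrm{max}(\Lambda_n;\Psi^{\lambda,k}(\eta,\omega))\bigr|.
\]

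Most of this is routine bookkeeping — the combinatorics of $\pi^k$ and the checks that every piece of the spliced path stays inside $\Lambda_{n+k}$. The one step I would be most careful about is verifying that the witnesses $a_j,b_j$ for a renormalised edge $\omega'_{v_jv_{j+1}}=1$ belong to the \emph{specific} maximal clusters $C(v_j)$ and $C(v_{j+1})$, and not merely to some maximal cluster in some block mapping onto $v_j$ or $v_{j+1}$; this is precisely where the injectivity of $\pi^k$ on $k$-blocks is used, and it is what makes the within-block connectivity available to glue consecutive edges together into a single path.
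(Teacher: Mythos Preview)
Your proof is correct. The paper does not actually give a proof of this lemma at all: it is stated as ``the following important property'' of the map $\Psi^{\lambda,k}$ and then immediately used, with the verification left implicit. Your argument supplies exactly the routine details one would expect --- lifting a coarse path by replacing each vertex $v_j$ with the maximal cluster $C(v_j)$ in the $k$-block above it, using the witnesses $a_j,b_j$ from the definition of $\omega'$ to bridge consecutive clusters, and then summing the disjoint cluster sizes for the volume bound. The point you flag as requiring care (that the witnesses $a_j,b_j$ land in the \emph{specific} clusters $C(v_j),C(v_{j+1})$, via the bijectivity of $\pi^k$ on $k$-blocks) is indeed the only place one could slip, and you handle it correctly.
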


\cref{lem:zoom+} describes how the effect of $\Psi$ on a mixed percolation process $\p_{\beta,p}$ can be bounded by the effect of adjusting the parameters $\beta$ and $p$.  

\begin{lem} \label{lem:zoom+}
For each $p,\lambda \in [0,1]$, $\beta > 0$, and $k\geq 0$ let
\[p' := \p_{\beta,p}( |K^\mathrm{max}_k|\geq \lambda L^{dk}) \qquad \text{and} \qquad \beta' := \lambda^2 L^{k(d-\alpha)} \beta.\]
The law of the random graph with vertex set $\{x : \eta (x) = 1\}$ and edge set $\{ xy : \omega_{xy} = \eta_x = \eta_y =1 \}$ under the measure $\Psi_*^{\lambda,k} \p_{\beta,p}$ stochastically dominates the law of the same random graph under the measure $\p_{\beta',p'}$.
\end{lem}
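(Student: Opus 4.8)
Let me think about what's being claimed. We have the mixed site-bond model $\p_{\beta,p}$ on the hierarchical lattice. We apply the map $\Psi^{\lambda,k}$, which zooms out $k$ scales, keeping only vertices $z$ (collapsed to $\pi^k(z)$) whose maximal cluster in their $k$-block is "big" (volume $\geq \lambda|\Lambda_k| = \lambda L^{dk}$), and keeping an edge between two such collapsed vertices iff there was an open edge between the corresponding maximal clusters. We want: the resulting graph stochastically dominates the graph from $\p_{\beta',p'}$ with $p' = \p_{\beta,p}(|K^{\max}_k| \geq \lambda L^{dk})$ and $\beta' = \lambda^2 L^{k(d-\alpha)}\beta$.

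So I need to check two things. First, the *site* marginals: a collapsed vertex $x = \pi^k(z)$ is present in $\eta'$ iff $|K^{\max}_k(z)| \geq \lambda L^{dk}$, and these events are independent across distinct $k$-blocks (they depend on disjoint sets of sites/edges — the $k$-block $\Lambda_k(z)$ and the edges in $F_k$ within it), each with probability exactly $p'$ by definition and translation invariance. Good, so the site-process of $\eta'$ is *exactly* $\mathbb Q_{p'}$.

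Second, the *bond* marginals, conditionally on $\eta'$. Here's the subtlety. Fix two distinct collapsed vertices $x = \pi^k(z)$, $y = \pi^k(w)$, both present in $\eta'$; so $|K^{\max}_k(z)|, |K^{\max}_k(w)| \geq \lambda L^{dk}$. The edge $xy$ is open in $\omega'$ iff there is some open $\omega$-edge with one endpoint in $K^{\max}_k(z)$ and the other in $K^{\max}_k(w)$. Condition on everything inside the two $k$-blocks (which determines the two maximal clusters $S := K^{\max}_k(z)$ and $T := K^{\max}_k(w)$, and in particular their vertex sets, which are disjoint and have sizes $|S|, |T| \geq \lambda L^{dk}$). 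The edges of $\omega$ running between $\Lambda_k(z)$ and $\Lambda_k(w)$ are independent of everything we've conditioned on, and each pair $\{a,b\}$ with $a \in \Lambda_k(z), b \in \Lambda_k(w)$ has $\|a-b\| = L^m$ where $L^m$ is the common distance between the two $(m-k)$... wait, let me just say: all these cross-edges have the *same* length, call it $D = \|z-w\|$ appropriately (the distance between the two blocks is a fixed value $L^{m}$ for some $m > k$, and $\pi^k$ maps this to $\|x-y\| = L^{m-k}$). So the probability that *no* $\omega$-edge connects $S$ to $T$ is $\prod_{a\in S, b \in T} e^{-\beta D^{-d-\alpha}} = e^{-\beta |S||T| D^{-d-\alpha}}$. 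Since $|S||T| \geq \lambda^2 L^{2dk}$ and $D = L^k \|x-y\|$, this is at most $e^{-\beta \lambda^2 L^{2dk} (L^k\|x-y\|)^{-d-\alpha}} = e^{-\lambda^2 L^{k(d-\alpha)}\beta \|x-y\|^{-d-\alpha}} = e^{-\beta'\|x-y\|^{-d-\alpha}}$. Hence $\p(\omega'_{xy} = 1 \mid \text{everything inside both blocks}) \geq 1 - e^{-\beta'\|x-y\|^{-d-\alpha}} = \p_{\beta'}(\omega_{xy}=1)$.

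Now I assemble these into a stochastic domination statement. The plan is to reveal the configuration in stages and build a monotone coupling. Reveal all the "internal" data — everything inside each $k$-block $\Lambda_k(z)$, i.e. all sites of $\eta$ and all edges of $F_k$ — which determines $\eta'$ (exactly $\mathbb Q_{p'}$-distributed, as noted) and determines each maximal cluster $K^{\max}_k(z)$. Then reveal the cross-block edges, which are mutually independent (for distinct pairs of blocks) and independent of the internal data. By the computation above, conditionally on the internal data, the indicators $\{\omega'_{xy}\}$ over pairs $x,y \in \eta'$ are independent (each depends on a disjoint set of cross-edges) with $\p(\omega'_{xy}=1) \geq \p_{\beta'}(\omega_{xy}=1)$. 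Since $p'$ is deterministic and $\beta'$ is deterministic, a standard coordinatewise coupling gives that $(\eta',\omega')$ restricted to the induced graph on $\eta'$ stochastically dominates $(\eta'',\omega'')$ with $\eta'' \sim \mathbb Q_{p'}$ and $\omega''$ conditionally Bernoulli-$(1-e^{-\beta'\|\cdot\|^{-d-\alpha}})$ on pairs within $\eta''$ — which is exactly the description of the $\p_{\beta',p'}$ graph. One should note that only edges *between distinct $k$-blocks* are produced by $\Psi^{\lambda,k}$ (edges within a single $k$-block collapse to loops and are discarded), but that's fine: the target model $\p_{\beta',p'}$ also only has edges between distinct vertices of $\bbH^d_L$, and $\pi^k$ collapses each $k$-block to one vertex, so there's no mismatch. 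I should make sure to phrase domination in terms of the induced graph exactly as in the statement, since $\Psi^{\lambda,k}$ may also record edges adjacent to collapsed vertices *not* in $\eta'$, but those are irrelevant to the induced graph on $\{x : \eta'_x = 1\}$.

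\medskip

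\noindent\textbf{Main obstacle.} The only genuinely delicate point is bookkeeping the conditional independence structure correctly: making sure that, after conditioning on all internal ($F_k$-and-sites) data, (i) the site process $\eta'$ is *exactly* product-$p'$ and not just dominating, and (ii) the cross-edge events $\{\omega'_{xy}=1\}$ for different collapsed pairs are conditionally independent — this requires noting that the set of $\omega$-edges between $\Lambda_k(z)$ and $\Lambda_k(w)$ is disjoint from the set between $\Lambda_k(z')$ and $\Lambda_k(w')$ whenever $\{z,w\}$ and $\{z',w'\}$ collapse to different pairs. Both are immediate from the product structure of $\p_\beta \otimes \mathbb Q_p$ but deserve to be stated carefully. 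Everything else — the computation $|S||T|\geq \lambda^2 L^{2dk}$ and the length rescaling $D = L^k\|x-y\|$ feeding into the exponent to produce $\beta' = \lambda^2 L^{k(d-\alpha)}\beta$ — is the routine calculation already foreshadowed in the proof of \cref{lem:zoom}.
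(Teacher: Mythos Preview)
Your proposal is correct and follows essentially the same approach as the paper: first observe that $\eta'$ is exactly $\mathbb Q_{p'}$-distributed (by independence of the internal data across $k$-blocks and transitivity), then condition on all internal data ($\eta$ and $\omega\cap F_k$) and verify that the cross-block edge indicators $\omega'_{xy}$ are conditionally independent with success probability at least $1-e^{-\beta'\|x-y\|^{-d-\alpha}}$ via the computation $|S||T|\geq (\lambda L^{dk})^2$ and $\|z-w\|=L^k\|x-y\|$. Your write-up is in fact somewhat more careful than the paper's about spelling out the conditional independence and the restriction to the induced graph on $\eta'$, but there is no substantive difference in method.
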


 Notice that when $\alpha = d$, the parameter $\beta$ is simply replaced by $\beta':=\lambda^2 \beta$.

\begin{proof}
Sample $(\eta,\omega) \sim \p_{\beta,p}$ and set $ (\eta',\omega') := \Psi^{\lambda,k}(\eta,\omega)$. Notice that $\eta'$ is determined by $\eta$ and $\omega \cap F_k$. By construction of $\Psi$, the definition of $p'$, and transitivity, we have that $\eta' \sim \mathbb Q_{p'}$. Hence, it suffices to check that if we fix realisations of $\eta$ and $\omega \cap F_k$ and independently sample $\omega \cap {\{xy \in E \backslash F_k : \eta_x = \eta_y =1 \}}$ according to its law under $\p_\beta$, then the law of $\omega'$ stochastically dominates the law of the restriction of a sample of $\p_{\beta'}$ to the set of edges $xy$ with $\eta'_x = \eta'_y = 1$. Indeed, notice that the state of the edges in $\omega'$ are independent of each other and that, arguing as in the proof of \cref{lem:zoom}, for every edge $xy$ with $\eta'_x = \eta'_y = 1$, the probability that $\omega_{xy} = 0$ is
\[
    \prod_{\substack{x'\in \bbH^d_L :\\ \pi^{k}(x') = x} } \prod_{\substack{y'\in \bbH^d_L :\\ \pi^{k}(y') = y}} \p_\beta \bra{ \omega_{x'y'} = 0 } \geq \sqbra{e^{ -\beta \bra{ L^{k} \den{x-y}  }^{-d-\alpha}}}^{ (\lambda L^{dk})^2 } = \p_{\beta'}(\omega_{xy} = 0)
\]
as required.
\end{proof}

\section{Lower bounds in the case $\alpha>d$}
\label{sec:alpha>d_lower}

In this section we prove the lower bound of \cref{thm:main} in the case $\alpha>d$. The proof uses an induction on scales that adapts the ideas of \cite[Lemma 2.4]{hutchcroft2021critical} to the large $\alpha$ regime.
Compared to the treatment of \cite{hutchcroft2021critical}, our argument is both more quantitative (which is necessary to get a non-vacuous statement in the large $\alpha$ regime), and is made more streamlined by the use of the renormalization notation established in the previous subsection.

\medskip

  Our argument involves repeatedly zooming out by $k$ scales, where $k$ is a carefully chosen integer depending on $\beta$. More precisely, we pick $k=k(\beta) : = k_0 \vee \lfloor \sqrt{ \log \beta } \rfloor$ where $k_0\geq 1$ is an integer that is sufficiently large to guarantee that 
\begin{equation} \label{eq:technical1}
    \mathbb Q_{\frac{1}{2}} \Big( \abs{ \{x \in \Lambda_{k} : \eta_x = 1 \} } \geq \frac{1}{3}L^{dk}  \Big) \geq 1 - \frac{1}{4}
\end{equation}
for every $k\geq k_0$; such a constant $k_0$ exists by the weak law of large numbers. We will zoom out exactly $\ell=\ell(\beta)$ times where $\ell\geq 1$ is the largest integer such that
\begin{equation} \label{eq:l_condition}
   L^{2dk} \exp \bra{ - \frac{\beta}{9^{\ell} L^{ (\alpha-d)\ell k }  } \cdot \frac{1}{L^{(d+\alpha)k}} } \leq \frac{1}{4}. 
\end{equation}
If no such $\ell$ exists (which may be the case when $\beta$ is small) we set $\ell=0$.

\begin{lem} \label{lem:induction_alpha>d}
        $\mathbb{P}_\beta(\lvert K^\mathrm{max}_{rk}\rvert \geq 3^{-r}L^{drk}) \geq \frac{1}{2}$ for every $\beta \geq 1$ and $r \in \{ 0, 1 , \dots, \ell \}$.
\end{lem}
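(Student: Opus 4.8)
The plan is to run an induction on scales using the renormalisation maps of \cref{sec:renormalization} with the fixed parameter $\lambda=\tfrac13$. Identifying the bond model $\mathbb P_\beta$ with the mixed model $\mathbb P_{\beta,1}$ (all sites open), I would introduce
\[
\beta_j:=\frac{\beta}{9^{\,j}L^{(\alpha-d)jk}},\qquad p_0:=1,\qquad p_{j+1}:=\mathbb P_{\beta_j,p_j}\!\left(\bigl|K^\mathrm{max}_k\bigr|\ge\tfrac13 L^{dk}\right),
\]
so that $\beta_{j+1}=\tfrac19 L^{(d-\alpha)k}\beta_j$ and $p_{j+1}$ are precisely the output parameters of \cref{lem:zoom+} applied at scale $k$ with $\lambda=\tfrac13$. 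The argument then has two parts: (i) reduce the lemma to the estimate $p_r\ge\tfrac12$ for all $r\le\ell$; (ii) establish that estimate by a second induction on $r$, where \eqref{eq:technical1} and \eqref{eq:l_condition} enter.

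For part (i), I would fix $r\ge2$ and arbitrary $(\beta,p)$, and apply \cref{lem:Psi_connectivity} with $n=(r-1)k\ge1$ and $\lambda=\tfrac13$: this shows that the event $\{\,\bigl|K_\mathrm{max}(\Lambda_{(r-1)k};\Psi^{1/3,k}(\eta,\omega))\bigr|\ge3^{-(r-1)}L^{d(r-1)k}\,\}$ forces $\bigl|K^\mathrm{max}_{rk}\bigr|\ge3^{-r}L^{drk}$. Since that event is increasing in the graph and, by \cref{lem:zoom+}, $\Psi_*^{1/3,k}\mathbb P_{\beta,p}$ stochastically dominates $\mathbb P_{\beta',p'}$ with $\beta'=\tfrac19 L^{(d-\alpha)k}\beta$ and $p'=\mathbb P_{\beta,p}(|K^\mathrm{max}_k|\ge\tfrac13 L^{dk})$, this gives the recursion $\mathbb P_{\beta,p}(|K^\mathrm{max}_{rk}|\ge3^{-r}L^{drk})\ge\mathbb P_{\beta',p'}(|K^\mathrm{max}_{(r-1)k}|\ge3^{-(r-1)}L^{d(r-1)k})$. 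Iterating it starting from $(\beta,1)$ and using $\mathbb P_{\beta_{r-1},p_{r-1}}(|K^\mathrm{max}_k|\ge\tfrac13 L^{dk})=p_r$ for the last step — together with the trivial cases $r=0$ (where $|K^\mathrm{max}_0|=1$) and $r=1$ (immediate from the definition of $p_1$) — I would conclude $\mathbb P_\beta(|K^\mathrm{max}_{rk}|\ge3^{-r}L^{drk})\ge p_r$ for every $r\le\ell$, reducing the lemma to showing $p_r\ge\tfrac12$.

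For part (ii), I would induct on $r$, the base case being $p_0=1$. Assuming $1\le r\le\ell$ and $p_{r-1}\ge\tfrac12$, write $N:=|\Lambda_k\cap\eta|$ and observe that if $N\ge\tfrac13 L^{dk}$ and the graph induced on $\Lambda_k\cap\eta$ is connected then $|K^\mathrm{max}_k|=N\ge\tfrac13 L^{dk}$; hence it is enough to bound $\mathbb P_{\beta_{r-1},p_{r-1}}(N<\tfrac13 L^{dk})$ and $\mathbb P_{\beta_{r-1},p_{r-1}}(\Lambda_k\cap\eta\ \text{disconnected},\,N\ge\tfrac13 L^{dk})$ each by $\tfrac14$. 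The first follows from \eqref{eq:technical1} and monotonicity of $\mathbb Q_p$ in $p$, using $k\ge k_0$ and $p_{r-1}\ge\tfrac12$. For the second, condition on $\eta$ with $N\ge\tfrac13 L^{dk}$: any two vertices of $\Lambda_k$ lie within distance $L^k$, so the edge between them is closed with probability at most $q:=\exp(-\beta_{r-1}L^{-(d+\alpha)k})$, independently, and a union bound over the smaller side of a disconnecting partition bounds the conditional disconnection probability by $\sum_{m\ge1}\binom Nm q^{m(N-m)}\le\sum_{m\ge1}(Nq^{N/2})^m$. The key input is that for $r\le\ell$ one has $\beta_{r-1}/\beta_\ell=9^{\ell-r+1}L^{(\alpha-d)(\ell-r+1)k}\ge9$, while maximality of $\ell$ makes \eqref{eq:l_condition} hold and the latter rearranges to $\beta_\ell L^{-(d+\alpha)k}\ge\log(4L^{2dk})$; together these give $q\le(4L^{2dk})^{-9}$, hence $Nq^{N/2}\le L^{dk}(4L^{2dk})^{-9N/2}\le\tfrac1{64}$ (using $N\ge\tfrac13 L^{dk}$ and $L^{dk}\ge2$), so the geometric sum is $<\tfrac14$. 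This yields $p_r\ge1-\tfrac14-\tfrac14=\tfrac12$ and closes the induction (if no valid $\ell\ge1$ exists, part (ii) is vacuous).

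The hard part is precisely this connectivity estimate in part (ii): the disconnection probability must be squeezed inside the $\tfrac14$ budget left after the law-of-large-numbers event has used up $\tfrac14$, and since \eqref{eq:l_condition} is calibrated at the final scale $\ell$ it is not obvious that enough room remains at earlier scales $r<\ell$ when the block $\Lambda_k$ is as small as $L^{dk}=2$. The point that rescues it is that $\beta_{r-1}$ always beats $\beta_\ell$ by the factor $9L^{(\alpha-d)k}\ge9$, so \eqref{eq:l_condition} may be invoked with a ninth power to spare; verifying that the exponents genuinely close for every admissible $r$ and every $L\ge2$, $d,k\ge1$ is the one calculation that needs to be done with care. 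A minor secondary issue is that the iteration in part (i) is legitimate only because $\{|K_\mathrm{max}(\Lambda_n;\cdot)|\ge c\}$ is an increasing event and \cref{lem:zoom+} is a stochastic-domination statement, so monotonicity of the map $\Psi^{\lambda,k}$ itself is never required.
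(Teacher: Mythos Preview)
Your argument is correct. It differs from the paper's in two respects, both of which make your route longer than necessary.

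First, the paper inducts directly on the statement of the lemma, applying a \emph{single} renormalisation $\Psi^{3^{-r},rk}$ to $\mathbb P_{\beta,1}$ at the step $r\to r+1$: by \cref{lem:zoom+} the resulting site parameter is exactly $p'=\mathbb P_\beta(|K^\mathrm{max}_{rk}|\ge3^{-r}L^{drk})$, which is $\ge\tfrac12$ by the induction hypothesis itself, while the resulting bond parameter is your $\beta_r$. This collapses your two-stage structure (iterated $\Psi^{1/3,k}$ followed by the auxiliary sequence $(p_j)$) into a one-shot reduction; both routes land on the same scale-$k$ estimate, but the paper avoids the separate bookkeeping of your part~(i) entirely.

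Second, and more to the point, what you flag as ``the hard part'' is in fact trivial in the paper's treatment. Rather than bounding the disconnection probability of $\Lambda_k\cap\eta$ via a partition argument, the paper simply asks that \emph{every} edge of $E_k$ be open: a union bound gives cost at most $L^{2dk}\exp(-\beta_r L^{-(d+\alpha)k})$, and since the left-hand side of \eqref{eq:l_condition} is monotone increasing in the index, this is $\le\tfrac14$ for every $r\le\ell$ directly from \eqref{eq:l_condition}, with no factor-of-$9$ slack required. Combined with \eqref{eq:technical1} via the independence of $\eta$ and $\omega$, one gets $\mathbb P_{\beta_r,1/2}(|K^\mathrm{max}_k|\ge\tfrac13 L^{dk})\ge\tfrac34\cdot\tfrac34\ge\tfrac12$ in one line (this is \eqref{eq:wasteful} in the paper). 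Your disconnection bound is valid but unnecessarily intricate; indeed the paper explicitly remarks after \cref{prop:alpha>d_lb} that even the crude ``all edges open'' estimate is not where precision is lost in this argument.
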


\begin{proof}
Fix $\beta \geq 1$. We proceed by induction on $r$. The result is trivial for $r = 0$. Assume that the result holds for some $r \in \{ 0, \dots, \ell-1 \}$. Letting $\mathbf{1}\in \{0,1\}^{\bbH^d_L}$ be the all-ones function, we have by \cref{lem:Psi_connectivity} that
\[
    \left|K_\mathrm{max}\left(\Lambda_{(r+1)k};(\mathbf{1},\omega)\right)\right| \geq 3^{-r} L^{drk}\left|K_\mathrm{max}\left(\Lambda_{k};\Psi^{3^{-r},rk}(\mathbf{1},\omega)\right)\right| 
\]
for each $\omega \in \{0,1\}^E$ and hence that
\[\mathbb{P}_{\beta, 1}\left(\bigl|K^\mathrm{max}_{(r+1)k}\bigr| \geq 3^{-r-1}L^{d(r+1)k}\right) \geq \Psi_*^{3^{-r}, r k} \mathbb{P}_{\beta, 1}\left(\left|K^\mathrm{max}_{k}\right| \geq \frac{1}{3}L^{dk}\right). \]
Applying \cref{lem:zoom+} with $p := 1$, we deduce that
\begin{equation} \label{eq:apply_gamma}
    \p_\beta \bra{ \left|K^\mathrm{max}_{(r+1)k}\right| \geq 3^{-r-1} L^{d(r+1)k} } \geq \p_{\beta',p'} \bra{ \left|K^\mathrm{max}_{k}\right| \geq \frac{1}{3}L^{dk} },
\end{equation}
where 
\[p' = \p_\beta \bra{ \left|K^\mathrm{max}_{(r+1)k}\right|  \geq 3^{-r}L^{drk} } \qquad \text{and} \qquad \beta' := \frac{\beta}{3^{2r} L ^{(\alpha-d)rk}}.\]
We have by the induction hypothesis that $p' \geq \frac{1}{2}$, so that
\begin{align}
\mathbb{P}_\beta(\lvert K^\mathrm{max}_{rk}\rvert \geq 3^{-r}L^{drk}) &\geq \p_{\beta',1/2} \bra{ \left|K^\mathrm{max}_{k}\right| \geq \frac{1}{3}L^{dk} }\nonumber\\
&\geq \mathbb Q_{1/2} \Big( \abs{ \{x \in \Lambda_{k} : \eta_x = 1 \} }\geq \frac{1}{3}L^{dk}  \Big)\cdot \p_{\beta'} \bra{ \omega_e = 1 \; \forall e \in E_k }.\label{eq:wasteful}
\end{align}
Our choice of $k$ ensures that $\mathbb Q_{1/2} \Big( \abs{ \{x \in \Lambda_{k} : \eta_x = 1 \} }\geq \frac{1}{3}L^{dk}  \Big) \geq \frac{3}{4}$, while we have by a union bound and our choice of $\ell$ that
\begin{align*}
    \p_{\beta'} \bra{ \omega_e = 1 \; \forall e \in E_k } &\geq 1 - \abs{E_k} \max_{e \in E_k} \p_{\beta'} \bra{ \omega_e = 0 } 
    &\geq 1 - L^{2dk} \text{exp}{\bra{ - \frac{\beta}{3^{2r} L^{ (\alpha-d)rk }  } \cdot \frac{1}{L^{(d+\alpha)k}} }} \geq \frac{3}{4},
\end{align*}
so that
\[
\mathbb{P}_\beta\left(\lvert K^\mathrm{max}_{rk}\rvert \geq 3^{-r}L^{drk}\right) \geq \frac{9}{16}\geq \frac{1}{2}
\]
as claimed.
\end{proof}

The following proposition implies the lower bound of \cref{thm:main} in the case $\alpha>d$, and gives an explicit estimate on the $o(1)$ term appearing in that estimate.

\begin{prop} \label{prop:alpha>d_lb}
If $\alpha>d$ then there exists a constant $C=C(d,L,\alpha) < \infty$ such that
\[
    \chi(\beta) \geq \beta^{ \frac{d}{\alpha - d} - \frac{C}{ \sqrt{\log \beta}} }
\]
for every $\beta \geq 2$.
\end{prop}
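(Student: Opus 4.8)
The plan is to turn the high-probability lower bound on $|K^{\mathrm{max}}_{\ell k}|$ from \cref{lem:induction_alpha>d} into a lower bound on the susceptibility, and then to show that for $\beta$ large the resulting exponent is $\frac{d}{\alpha-d} - O(1/\sqrt{\log\beta})$. Since $\chi(\beta) = \mathbb E_\beta|K_0| \geq \mathbb E_\beta|K_0|\,\mathbbm 1(0 \in K^{\mathrm{max}}_{\ell k})$ and on this event $|K_0| = |K^{\mathrm{max}}_{\ell k}|$, a first instinct is to write $\chi(\beta) \geq \mathbb E_\beta[\,|K^{\mathrm{max}}_{\ell k}|\,\mathbbm 1(0 \in K^{\mathrm{max}}_{\ell k})\,]$; but it is cleaner to use transitivity. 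By transitivity of $\bbH^d_L$, every vertex of $\Lambda_{\ell k}$ is equally likely to lie in $K^{\mathrm{max}}_{\ell k}$, so $\mathbb P_\beta(0 \in K^{\mathrm{max}}_{\ell k}) = \mathbb E_\beta|K^{\mathrm{max}}_{\ell k}|/|\Lambda_{\ell k}| = \mathbb E_\beta|K^{\mathrm{max}}_{\ell k}| L^{-d\ell k}$. Combining this with $\chi(\beta) \geq \mathbb E_\beta|K^{\mathrm{max}}_{\ell k}|$ (since $|K_0| \geq |K^{\mathrm{max}}_{\ell k}|$ whenever $0$ belongs to it, but actually we just need $\chi(\beta)\ge \mathbb{E}_\beta|K_0|\mathbbm 1(0\leftrightarrow$ the max cluster$)$, which by transitivity equals $\mathbb E_\beta|K^{\mathrm{max}}_{\ell k}|\cdot\mathbb P_\beta(\cdot)$ — simplest is just $\chi(\beta)\ge \mathbb E_\beta|K^{\mathrm{max}}_{\ell k}|^2 L^{-d\ell k}$, or even more simply, use that $\mathbb E_\beta|K_0|\ge \mathbb E_\beta[|K^{\mathrm{max}}_{\ell k}|\,;\,0\in K^{\mathrm{max}}_{\ell k}]\ge 3^{-\ell}L^{d\ell k}\cdot\mathbb P_\beta(0\in K^{\mathrm{max}}_{\ell k},\,|K^{\mathrm{max}}_{\ell k}|\ge 3^{-\ell}L^{d\ell k})$). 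Cleanest route: from \cref{lem:induction_alpha>d} with $r = \ell$, $\mathbb P_\beta(|K^{\mathrm{max}}_{\ell k}| \geq 3^{-\ell}L^{d\ell k}) \geq 1/2$; conditional on this event and using transitivity to average over which vertex we root at, $\mathbb E_\beta|K_0| \geq L^{-d\ell k}\,\mathbb E_\beta[\,|K^{\mathrm{max}}_{\ell k}|^2\,] \geq L^{-d\ell k}(3^{-\ell}L^{d\ell k})^2 \cdot \tfrac12 = \tfrac12\, 3^{-2\ell} L^{d\ell k}$.

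The remaining task is arithmetic: show $\tfrac12 3^{-2\ell}L^{d\ell k} \geq \beta^{\frac{d}{\alpha-d} - C/\sqrt{\log\beta}}$ for $\beta$ large, where $k = k(\beta) = k_0 \vee \lfloor\sqrt{\log\beta}\rfloor$ and $\ell = \ell(\beta)$ is the largest integer satisfying \eqref{eq:l_condition}. First I would analyze \eqref{eq:l_condition}: it holds iff $\frac{\beta}{9^\ell L^{(\alpha-d)\ell k}} \geq L^{(d+\alpha)k}\log(4L^{2dk})$, i.e. iff $(9 L^{(\alpha-d)k})^\ell \leq \beta / (L^{(d+\alpha)k}\log(4L^{2dk}))$. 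Taking logs, $\ell$ is essentially $\big(\log\beta - O(k)\big)\big/\big((\alpha-d)k\log L + \log 9\big)$, so $\ell k \approx \frac{\log\beta}{(\alpha-d)\log L} - O(k) - O(1)\cdot\frac{\log\beta}{k}$. Plugging in $k \asymp \sqrt{\log\beta}$ gives $\ell k = \frac{\log_L\beta}{\alpha-d} - O(\sqrt{\log\beta})$, hence $L^{d\ell k} = \beta^{d/(\alpha-d) - O(1/\sqrt{\log\beta})}$. Finally the correction factor: $\ell \asymp \frac{\log\beta}{k} \asymp \sqrt{\log\beta}$, so $3^{-2\ell} = \beta^{-O(1/\sqrt{\log\beta})}$ and the constant $\tfrac12$ is absorbed. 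Chasing the constants through carefully yields the bound with an explicit $C = C(d,L,\alpha)$; one should handle the small-$\beta$ case ($\ell = 0$, or $\beta$ below some threshold) separately by noting $\chi(\beta) \geq 1$ and absorbing that range into $C$ by taking $C$ large, valid for all $\beta \geq 2$.

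The main obstacle is not conceptual but bookkeeping: one must track how the $O(k)$ and $O(\log\beta / k)$ error terms in $\ell k$ propagate through the exponent, and verify that the specific choice $k = \lfloor\sqrt{\log\beta}\rfloor$ is exactly what balances the two sources of loss (the "$-O(k)$" coming from the union bound over $|E_k|\approx L^{2dk}$ edges and the $L^{(d+\alpha)k}$ edge-length penalty, versus the "$-O(\log\beta/k)$" coming from the $\log 9$ additive term in the denominator of $\ell$, which after multiplying by $k$ contributes $-O(\ell) = -O(\log\beta/k)$). Both losses are $\Theta(\sqrt{\log\beta})$ with this choice of $k$, giving the stated $O(1/\sqrt{\log\beta})$ correction to the exponent; any other power of $\log\beta$ for $k$ would make one of the two terms dominate and worsen the bound. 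I would also double-check the edge case where $\ell$ as defined could be $0$ for moderately large $\beta$ — this happens only when $\beta$ is bounded by a constant depending on $(d,L,\alpha)$, since for $\beta$ large \eqref{eq:l_condition} holds with $\ell = 1$, so this is covered by enlarging $C$.
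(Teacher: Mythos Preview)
Your proposal is correct and follows essentially the same approach as the paper: both use transitivity to obtain $\chi(\beta) \geq L^{-d\ell k}\,\mathbb E_\beta[|K^{\mathrm{max}}_{\ell k}|^2]$, plug in \cref{lem:induction_alpha>d} with $r=\ell$ to get $\chi(\beta) \geq \tfrac12\,9^{-\ell} L^{d\ell k}$, and then carry out the arithmetic on $\ell$ and $k$ to extract the exponent $\frac{d}{\alpha-d} - O(1/\sqrt{\log\beta})$. Your analysis of why $k \asymp \sqrt{\log\beta}$ balances the two error sources is a nice piece of commentary that the paper omits, and your handling of the small-$\beta$ edge case by absorbing into $C$ matches the paper's implicit treatment.
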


\begin{proof}
Fix $\beta \geq 2$. We have by transitivity that
\[
  \chi(\beta) \geq \mathbb{E}_\beta\left[|K^\mathrm{max}_n| \mathbbm{1}(0\in |K^\mathrm{max}_n|) \right] = \frac{1}{|\Lambda_n|}\sum_{x\in \Lambda_n} \mathbb{E}_\beta\left[|K^\mathrm{max}_n| \mathbbm{1}(x\in |K^\mathrm{max}_n|) \right]  = L^{-dn}\mathbb{E}_\beta\left[|K^\mathrm{max}_n|^2 \right]
\]
for every $n\geq 1$, and hence by \cref{lem:induction_alpha>d} that
\begin{equation} \label{eq:chi_raw_alpha>d}
    \chi(\beta) \geq \frac{1}{2} \cdot \frac{L^{dk\ell}}{9^\ell}.
\end{equation}
To complete the proof, we use the definitions of $k$ and $\ell$ to compute that
\[
9^\ell L^{(\alpha-d)k\ell} \sim \frac{1}{2d \log L} \frac{k\beta}{L^{(d+\alpha)k}}= \beta \cdot \exp\left[-O(\sqrt{\log \beta})\right] \qquad \text{ as $\beta \to \infty$},
\]
where $\sim$ means that the ratio of the two sides converges to $1$ in the relevant limit,
so that
\[
\ell \sim \frac{1}{(\alpha -d) \log L} \frac{\log \beta}{k} \sim \frac{1}{(\alpha -d)\log L} \sqrt{\log \beta} \qquad \text{ as $\beta \to \infty$}
\]
and
\[
\frac{L^{dk\ell}}{9^\ell} = 9^{-\frac{\alpha}{\alpha-d} \ell}\left(9^\ell L^{(\alpha-d)k\ell}\right)^{\frac{d}{\alpha-d}} =\beta^{\frac{d}{\alpha-d}} \cdot \exp\left[-O(\sqrt{\log \beta})\right] \qquad \text{ as $\beta \to \infty$,}
\]
 where all implicit constants may depend on $d$, $\alpha$, and $L$. Substituting this estimate into \eqref{eq:chi_raw_alpha>d} implies the claim.
\end{proof}

\begin{remark}
It may seem that the estimate \eqref{eq:wasteful} is very wasteful: The Erd\H{o}s-R\'enyi random graph contains a giant cluster well before every edge is open, and it would suffice for the rest of the analysis to have $\beta' L^{-(d+\alpha)k} \gg L^{-dk}$ rather than $\beta' L^{-(d+\alpha)k} \gg k $ as we require. It turns out, however, that carrying the analysis through with this improvement (and with the resulting optimal choices of $k$ and $\ell$) merely leads to a better value of the constant $C$ in \cref{prop:alpha>d_lb}. 
\end{remark}


\section{The case $\alpha=d$}
\label{sec:alpha=d}

In this section we prove the $\alpha=d$ case of \cref{thm:main}. We begin with the lower bound, which is the primary new result of the paper, before giving a short self-contained treatment of the upper bound (which recovers the results of \cite{georgakopoulos2020percolation}) in \cref{sec:alpha=d_upper}. The arguments of \cref{sec:alpha=d_lower} rely on the renormalization framework developed in the previous sections while those of \cref{sec:alpha=d_upper} use a separate argument, which draws in part on the techniques of \cite[Section 4]{MR4462652}.

\subsection{Lower bounds}

\label{sec:alpha=d_lower}





In this section we prove the lower bound of \cref{thm:main} in the case $\alpha=d$.

\begin{prop} \label{prop:alpha=d_lb}
If $\alpha=d$ then there exists $c=c(d,L) > 0$ such that 
    $\chi(\beta) \geq e^{ e^{c \beta} }$
for every $\beta \geq 1$.
\end{prop}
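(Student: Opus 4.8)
The plan is to run an induction on scales using the mixed site-bond renormalization of \cref{sec:renormalization}, but with two crucial features tailored to the self-similar case $\alpha=d$: (i) at each step we zoom out by \emph{doubling} the current scale rather than adding a fixed number of scales, and (ii) we \emph{sprinkle}, i.e.\ slightly increase $\beta$ at each step, in such a way that the total increase over all steps is bounded by a factor of $2$. Concretely, I would fix a target density threshold $\lambda$ bounded away from $0$ (something like $\lambda = 1/4$ or whatever is forced by the weak law of large numbers estimate \eqref{eq:technical1}), and define a sequence of scales $k_0, 2k_0, 4k_0, \dots, 2^j k_0$ and a sequence of parameters $\beta_0 < \beta_1 < \cdots$ with $\beta_j = \beta \prod_{i<j}(1 + c\,2^{-i})$ so that $\beta_j \le 2\beta$ for all $j$ provided $c$ is small. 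The induction hypothesis at step $j$ is that $\mathbb P_{\beta_j}(|K^{\mathrm{max}}_{2^j k_0}| \ge \lambda^{?} L^{d 2^j k_0}) \ge \tfrac12$, for an appropriate power of $\lambda$ that I will track (since \cref{lem:zoom+} costs a factor $\lambda^2$ in $\beta$ and a factor $\lambda L^{dk}$ in volume each time, and here $L^{d-\alpha}=1$).

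The inductive step uses \cref{lem:Psi_connectivity} and \cref{lem:zoom+} exactly as in the proof of \cref{lem:induction_alpha>d}: zooming out by $k=2^j k_0$ scales from a percolation configuration with parameter $\beta_{j+1}$ and good-density $p'\ge \tfrac12$ (supplied by the hypothesis), we get a mixed model $\mathbb P_{\beta'', p''}$ with $\beta'' = \lambda^2 \beta_{j+1}$ (using $\alpha=d$ so there is no $L^{k(d-\alpha)}$ factor!) on a block of side $L^{2^{j+1}k_0}$. The key point where self-similarity bites is that the renormalized parameter $\lambda^2\beta_{j+1}$ does \emph{not} decay in the scale, so the sprinkle only needs to compensate for the fixed multiplicative loss $\lambda^2$ (up to the final all-edges-open union bound), and because we doubled the scale, there are only $\log_2$-many steps needed to reach scale $2^j k_0$, hence the geometric-series sprinkle budget $\sum c\, 2^{-i} < \infty$ closes. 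The volume lower bound then reads $|K^{\mathrm{max}}_{2^{j}k_0}| \gtrsim \lambda^{O(j)} L^{d 2^j k_0}$, and feeding this into the second-moment-to-susceptibility bound $\chi(\beta) \ge L^{-dn}\mathbb E_\beta[|K^{\mathrm{max}}_n|^2]$ (as in \cref{prop:alpha>d_lb}) at $n = 2^j k_0$ gives $\chi(\beta) \gtrsim \lambda^{O(j)} L^{d 2^j k_0}$. Now choose $j$ as large as the sprinkle budget allows: the constraint is essentially that the union bound "all edges in $E_k$ open at parameter $\sim \lambda^2\beta$" succeeds, i.e.\ $L^{2dk}\exp(-c\beta L^{-2dk}) \le \tfrac14$, which permits $k$ up to order $\log\beta$, i.e.\ $j$ up to order $\log\log\beta$. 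Then $2^j k_0$ is of order $\log\beta$, so $L^{d 2^j k_0} = \exp(\exp(\Theta(\beta)))$-ish — more precisely $L^{d 2^j k_0}$ is exponential in $2^j$ which is polynomial in $\beta$... let me restate: with $2^j k_0 \asymp \log_L \beta$ we'd only get $\chi \gtrsim \beta^{O(1)}$, so the doubling must be pushed one step further, taking $j$ maximal so that $\beta_j \le 2\beta$ still holds \emph{and} the per-step estimate survives; the doubling of scales means $n_j = 2^j k_0$ can be taken all the way up to $\asymp e^{c\beta}$ before the sprinkled parameter fails to keep every edge of $E_{n_j}$ open, giving $\chi(\beta) \ge L^{d n_j}/\text{poly} \ge e^{e^{c\beta}}$.

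The main obstacle, and the place I expect to spend real care, is the bookkeeping of the sprinkle budget against the scale-doubling: one must verify that the parameter after $j$ renormalization steps is $\ge$ some fixed multiple of $\beta$ (not decaying), that the accumulated density losses $\lambda^{O(j)}$ are still negligible compared to the volume gain $L^{d n_j}$ when $j \asymp \log\log(e^{c\beta}) \asymp \log\beta$ (so $\lambda^{O(\log\beta)} = \beta^{-O(1)}$, which is indeed absorbed by $e^{e^{c\beta}}$), and — most delicately — that the all-edges-open union bound at the \emph{final} step is where scale-doubling pays off: because the last block has side $L^{n_j}$ with $n_j \asymp e^{c\beta}$, we need $\beta' L^{-(d+\alpha)n_j} \gg n_j$, and since $\beta' \asymp \beta$ is essentially fixed this forces $n_j = O(\log\beta)$ unless the doubling structure lets us instead only ever demand all-edges-open on the \emph{small} block $E_{k_0}$ of fixed size at every step, which is the correct reading of the argument. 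Getting that logic exactly right — demanding the "all edges open" event only at the bottom scale $k_0$ while the doubling happens purely through iterating $\Psi$ on already-renormalized good blocks — is the crux, and it is exactly what makes the self-similar $\alpha=d$ case produce a double exponential rather than a power law.
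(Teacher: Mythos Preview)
Your overall architecture---doubling scales, sprinkling, and using the second-moment susceptibility bound at the end---matches the paper's, but there is a genuine gap in the parameter accounting that prevents the argument from closing.

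The problem is your choice of a \emph{fixed} density threshold $\lambda$ bounded away from $0$ and $1$. With $\alpha=d$, applying \cref{lem:zoom+} at threshold $\lambda$ replaces $\beta$ by $\lambda^2\beta$, a multiplicative loss by the fixed factor $\lambda^2<1$. To keep the effective parameter from collapsing you must sprinkle by at least $\lambda^{-2}$ at \emph{every} step, so after $j$ steps the required sprinkle is $\lambda^{-2j}$---certainly not bounded by $2$, and not covered by your geometric budget $\prod_{i<j}(1+c\,2^{-i})$. Your proposal senses this (the passage beginning ``let me restate'' is exactly where the accounting breaks), but the fix is not to push the same scheme harder.

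The paper's resolution is to take the threshold \emph{extremely close to $1$}: it works with $\lambda=1-\eps$ where $\eps$ starts at $\delta=e^{-c\beta}$ and \emph{doubles} at each step. Then the per-step loss in $\beta$ is only $(1-\eps)^2\approx 1-2\eps$, which a sprinkle of $(1+6\eps)$ more than compensates (indeed $(1-\eps)^2(1+6\eps)\ge 1$ for $\eps\le 1/2$); the total sprinkle over all steps is $\exp\bigl(O(\sum_r \eps_r)\bigr)=\exp\bigl(O(\delta\cdot 2^\ell)\bigr)$, which stays bounded as long as $2^\ell\delta$ stays bounded. This is what permits $\ell\asymp \log(1/\delta)\asymp\beta$ doublings and hence a final scale $n_\ell=2^\ell n_0=e^{\Theta(\beta)}$.

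This design forces a second ingredient you do not have: a nontrivial \emph{base case}. To start the induction with density $1-2\delta$ (not merely $1/4$) and site-probability $p=1-\delta$, one cannot begin at scale $k_0=O(1)$. The paper establishes (\cref{lem:alpha=d_base}) that at an initial scale $n_0\asymp\beta$ and parameter $\beta/2$, the largest cluster in $\Lambda_{n_0}$ occupies a $(1-2\delta)$-fraction with probability at least $1-\delta$; this is where the ``all short edges open'' union bound is spent, once, at a scale where it succeeds. Your proposal's attempt to demand ``all edges open on the small block $E_{k_0}$ at every step'' does not produce a density close enough to $1$ to make the sprinkle summable.
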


 We will prove \cref{prop:alpha=d_lb} using a ``sprinkled renormalization'' argument, in which we slightly increase the parameter each time we zoom out. An interesting feature of the proof is that, rather than going up one scale at a time, we instead \emph{double} the scale at each induction step, so that the side-length of the block considered at the $i$th induction step is double-exponential in $i$. We will rely on two auxiliary lemmas, the first of which encapsulates the induction step.

\begin{lem}[Inductive estimate] \label{lem:alpha=d_induction}
If $\alpha =d$ then the implication
\[
    \left(\p_{\beta,p} \left(|K^\mathrm{max}_n|\geq (1-\eps) \abs{\Lambda_n} \right) \geq p \right) \quad \implies \quad
    \left(\p_{(1+6\eps)\beta,p} \left(|K^\mathrm{max}_{2n}|\geq (1-2\eps) \abs{\Lambda_n} \right) \geq p \right)
\]
holds for every $p\in [0,1]$, $\beta \geq 1$, $0<\eps\leq 1/2$, and $n\geq 1$.
\end{lem}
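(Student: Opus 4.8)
The plan is to prove the implication by a single renormalisation step using the map $\Psi^{\lambda,k}$ of \cref{sec:renormalization}, with the choices $k=n$ (so that we double the scale in one shot) and $\lambda=1-\eps$ (so that the goodness threshold coincides with the cluster fraction $1-\eps$ provided by the hypothesis). Since $\alpha=d$, \cref{lem:zoom+} says that this renormalisation multiplies the parameter $\beta$ by exactly $\lambda^2=(1-\eps)^2<1$; the role of the ``sprinkling'' factor $1+6\eps$ is precisely to undo this contraction, so that the effective parameter after renormalising is still at least $\beta$.

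Before the main argument I would record two elementary monotonicity facts. First, for each $t$ the event $\{\abs{K^\mathrm{max}_n}\ge t\}$ is increasing in the mixed site-bond configuration $(\eta,\omega)$: turning a vertex or an edge on can only merge or enlarge $(\eta,\omega)$-clusters, so the maximal cluster volume is non-decreasing. Consequently, stochastic domination of the associated random graphs transfers lower bounds on the probability of this event. Second, $\p_{\beta_2,p_2}$ stochastically dominates $\p_{\beta_1,p_1}$ whenever $\beta_2\ge\beta_1$ and $p_2\ge p_1$, since $\p_{\beta,p}=\mathbb Q_p\times\p_\beta$ is a product of two families each of which is monotone in its parameter.

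The core of the proof is then one application each of \cref{lem:zoom+} and \cref{lem:Psi_connectivity}. Set $\nu:=\p_{(1+6\eps)\beta,p}$ and apply $\Psi^{1-\eps,\,n}$ to it. By \cref{lem:zoom+} with $k=n$ and $\alpha=d$, the random graph under $\Psi^{1-\eps,n}_*\nu$ stochastically dominates the random graph under $\p_{\beta',p'}$, where
\[
\beta'=(1-\eps)^2(1+6\eps)\beta\qquad\text{and}\qquad p'=\p_{(1+6\eps)\beta,p}\bigl(\abs{K^\mathrm{max}_n}\ge(1-\eps)L^{dn}\bigr).
\]
By monotonicity in $\beta$ and the hypothesis, $p'\ge\p_{\beta,p}\bigl(\abs{K^\mathrm{max}_n}\ge(1-\eps)\abs{\Lambda_n}\bigr)\ge p$. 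Moreover $(1-\eps)^2(1+6\eps)\ge1$ for all $0<\eps\le\tfrac{1}{2}$: the map $\eps\mapsto(1+6\eps)-(1-\eps)^{-2}$ is concave and vanishes at both $\eps=0$ and $\eps=\tfrac{1}{2}$, hence is nonnegative on $[0,\tfrac{1}{2}]$ — and this is exactly why the constant $6$ appears. Thus $\beta'\ge\beta$, and combining the two monotonicity facts with the hypothesis gives
\[
\p_{\beta',p'}\bigl(\abs{K^\mathrm{max}_n}\ge(1-\eps)L^{dn}\bigr)\ge\p_{\beta,p}\bigl(\abs{K^\mathrm{max}_n}\ge(1-\eps)\abs{\Lambda_n}\bigr)\ge p.
\]
Finally, \cref{lem:Psi_connectivity} with $k=n$ gives the deterministic bound $\abs{K_\mathrm{max}(\Lambda_{2n};(\eta,\omega))}\ge(1-\eps)L^{dn}\,\abs{K_\mathrm{max}(\Lambda_n;\Psi^{1-\eps,n}(\eta,\omega))}$, so that the event $\{\abs{K^\mathrm{max}_n}\ge(1-\eps)L^{dn}\}$ evaluated at $\Psi^{1-\eps,n}(\eta,\omega)$ is contained in the event $\{\abs{K^\mathrm{max}_{2n}}\ge(1-\eps)^2L^{2dn}\}$ evaluated at $(\eta,\omega)$. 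Combining this containment of events, the definition of the pushforward, the stochastic domination of $\p_{\beta',p'}$ by $\Psi^{1-\eps,n}_*\nu$, and the last displayed bound, we obtain $\nu\bigl(\abs{K^\mathrm{max}_{2n}}\ge(1-\eps)^2\abs{\Lambda_{2n}}\bigr)\ge p$, which completes the proof since $(1-\eps)^2\ge1-2\eps$.

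I do not expect a genuine obstacle here; given the renormalisation framework, the lemma is essentially a bookkeeping exercise in composing \cref{lem:zoom+} and \cref{lem:Psi_connectivity} with the right parameters. The two points that require care are: (a) keeping the two separate uses of stochastic domination straight — one coming from \cref{lem:zoom+} itself, the other from monotonicity of $\p_{\beta,p}$ in its parameters — and checking that every event to which domination is applied really is increasing; and (b) the choice of sprinkling constant, which must be large enough that $(1-\eps)^2(1+6\eps)\ge1$ over the whole range $0<\eps\le\tfrac{1}{2}$ (a smaller constant would fail near $\eps=\tfrac{1}{2}$) yet still of the form $1+O(\eps)$, so that when this step is iterated in the proof of \cref{prop:alpha=d_lb} the parameter $\beta$ does not grow beyond twice its starting value.
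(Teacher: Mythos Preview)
Your proposal is correct and follows essentially the same route as the paper: one application of \cref{lem:Psi_connectivity} and one of \cref{lem:zoom+} with $k=n$ and $\lambda=1-\eps$, combined with the calculus fact $(1-\eps)^2(1+6\eps)\ge 1$ on $[0,\tfrac12]$ and monotonicity of $\p_{\beta,p}$ in its parameters. The only difference is cosmetic ordering (you invoke \cref{lem:zoom+} before \cref{lem:Psi_connectivity}, the paper does the reverse), and your added discussion of why the relevant events are increasing is a welcome clarification.
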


The next auxiliary lemma establishes the base case of the induction. This base case estimate is more delicate than one might expect, and we do \emph{not} take our base case to be $n=0$. Rather, for the induction to work, we need to find a base scale $n_0$ where the probability that $\abs{K^\mathrm{max}_n}$ is close to $\abs{\Lambda_n}=L^{dn}$ under $\p_{\beta,p}$ is at least $p$, where $p$ is a constant that is bounded away from zero. (NB: It is very important that the $p$ appearing as the parameter in $\p_{\beta,p}$ and the $p$ appearing as the lower bound on the probability of the relevant event are equal!) To address the increase in $\beta$ along the induction, we begin with a lower initial parameter $\frac{\beta}{2}$.

\begin{lem}[Base case] \label{lem:alpha=d_base}
If $\alpha=d$ then there exists a constant $\beta_* =\beta_*(d,L)< \infty$ such that if  we define
\[
\delta = \delta(\beta)=\exp\left[-L^{-9d}\beta\right] \qquad \text{ and } \qquad n_0=n_0(\beta)=  \left\lceil \frac{2 \beta }{L^{9d} d \log L} \right\rceil
\]
then
\[
    \p_{\frac{1}{2}\beta,1-\delta} \bra{ |K^\mathrm{max}_{n_0}| \geq ( 1 -2 \delta) L^{dn_0} } \geq 1 - \delta
\]
for every $\beta \geq \beta_*$.
\end{lem}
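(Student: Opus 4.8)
\textbf{Proof proposal for \cref{lem:alpha=d_base}.}

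The plan is to produce the base scale $n_0$ directly, by a single large-deviation / union-bound estimate on the mixed site-bond model $\p_{\frac12 \beta,\, 1-\delta}$, exploiting the self-similarity $\alpha=d$ so that every copy of $\Lambda_1$ zoomed out leaves $\beta$ unchanged. First I would fix a good intermediate scale: set $m$ to be a constant multiple of the dimension (morally $m=9d$, matching the $L^{-9d}$ in the statement) and work inside a single $m$-block. Within $\Lambda_m$, the number of vertices present in $\eta$ is Binomial$(L^{dm}, 1-\delta)$, so with $\delta$ exponentially small in $\beta$ the probability that fewer than $(1-2\delta)L^{dm}$ vertices survive is itself exponentially small in $\beta$ — this is one of the two "$1-\delta$-type" error terms. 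Simultaneously I would show that, conditionally on $\eta$, the surviving vertices of $\Lambda_m$ are all $(\eta,\omega)$-connected to each other with probability $1-\delta$: the cheapest way is to note that for any two surviving vertices $x,y\in \Lambda_m$ the single edge $xy$ is open with probability $1-\exp(-\tfrac12\beta\|x-y\|^{-2d}) \ge 1-\exp(-\tfrac12\beta L^{-2dm})$, and a union bound over the at most $L^{2dm}$ pairs gives that \emph{all} such edges are open except with probability at most $L^{2dm}\exp(-\tfrac12\beta L^{-2dm})$; choosing the constant in $m$ (and $\beta_*$) large enough makes this $\le \delta$. Thus at scale $m$ we have $\p_{\frac12\beta,1-\delta}(|K^{\mathrm{max}}_m|\ge (1-2\delta)L^{dm}) \ge 1-2\delta$, say, which is already of the required form up to adjusting constants; I would absorb the factor $2$ by a mild tweak of $\delta$'s constant.

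Next I would boost the scale from $m$ to $n_0$ by a deterministic iteration that costs nothing in $\beta$ because $\alpha=d$. The point is that once a single $m$-block is "internally connected and almost full", we can glue $L^{dm}$ consecutive such blocks together using the edges of $F_{m}$-type joining them: at scale $m+1$, the $L^d$ sub-blocks each carry (independently) a near-full connected cluster with probability $\ge 1-2\delta$, and conditionally on this the $\binom{L^d}{2}$ long edges at distance $L^{m+1}$ joining representatives of these clusters are each open with probability $1-\exp(-\tfrac12\beta L^{-d(m+1)-d(m+1)})\ge 1-\exp(-\tfrac12\beta L^{-2d(m+1)})$; by a union bound everything connects up except with probability that remains $\le \delta$ after we again pay into the choice of $m$. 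Iterating this from scale $m$ up to scale $n_0$ — a total of $n_0-m$ steps — the total failure probability is at most $(n_0-m)$ times a single-step failure probability of order $L^{2dn_0}\exp(-\tfrac12\beta L^{-2dn_0})$, and here is where the precise form $n_0 \asymp \beta/(L^{9d} d\log L)$ is used: it is exactly the largest scale at which $L^{2dn_0} = e^{2dn_0\log L}$ is still swamped by $e^{\frac12\beta L^{-2dn_0}}$ with room to spare, so the cumulative error stays below $\delta = e^{-L^{-9d}\beta}$. At the same time, each gluing step can lose at most an $O(\delta L^{-dm})$ fraction of vertices relative to "completely full", and summing the geometric-type losses over the $n_0$ steps still leaves $|K^{\mathrm{max}}_{n_0}| \ge (1-2\delta)L^{dn_0}$, as required.

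The main obstacle is the bookkeeping in the second paragraph: making sure that the phrase "$\ge p$" in the conclusion is with the \emph{same} $p = 1-\delta$ that appears as the site-parameter, and that the volume deficit really does stay at $2\delta$ rather than degrading by a constant factor at every one of the $\sim n_0$ steps. This is the reason one cannot naively iterate the one-step estimate with constants $2\delta, 4\delta, 8\delta,\dots$; instead I would run the iteration so that at step $j$ the deficit is controlled by a partial sum $\delta(1 + c L^{-dm} + c^2 L^{-2dm}+\cdots) \le \delta/(1-cL^{-dm}) \le 2\delta$, which is summable precisely because doubling/gluing multiplies the available volume by $L^{dm}$ at each step while the newly-introduced error is only additive. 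The analogous care with the failure probability — keeping the \emph{sum} of $n_0$ single-step failures below $\delta$ rather than letting it compound — is handled by the generous choice of the exponent $9d$ (any sufficiently large constant works), and I would simply set $\beta_*$ large enough that $n_0 L^{2dn_0}\exp(-\tfrac12\beta L^{-2dn_0}) \le \tfrac12\delta$ and $\mathrm{Bin}(L^{dn_0},1-\delta)$ concentration gives the other $\tfrac12\delta$, then invoke a union bound to conclude. Everything else is a routine application of \cref{lem:zoom} (to see $\beta$ is unchanged when $\alpha=d$) together with elementary estimates on the binomial and on products of edge-probabilities exactly as in the proof of \cref{lem:zoom+}.
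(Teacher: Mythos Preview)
Your gluing step in the second paragraph has a genuine gap. You propose to connect the $L^d$ sub-blocks of $\Lambda_{k+1}$ by single edges between chosen representatives, each open with probability $1-\exp(-\tfrac12\beta L^{-2d(k+1)})$. But with the stated definitions one has $L^{dn_0}\asymp\delta^{-2}$, hence $L^{-2dn_0}\asymp\delta^{4}$ and $\tfrac12\beta L^{-2dn_0}\asymp \tfrac12\beta\delta^4\to 0$; a single edge of length $L^{n_0}$ is therefore open only with probability $\approx \tfrac12\beta\delta^4\to 0$, and $\exp(-\tfrac12\beta L^{-2dn_0})\to 1$ rather than $0$. Your claimed inequality $n_0 L^{2dn_0}\exp(-\tfrac12\beta L^{-2dn_0}) \le \tfrac12\delta$ thus fails badly---the left side diverges like $\beta\delta^{-4}$. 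Single-representative edges only work up to scales $n$ with $L^{2dn}\ll\beta$, i.e.\ $n=O(\log\beta)$, far short of $n_0\asymp\beta$. (Separately, for the first paragraph to give an error $\le\delta$ you need $\tfrac12 L^{-2dm}>L^{-9d}$, i.e.\ $m\le 4$, not $m$ large.)

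The missing idea is to connect sub-blocks using \emph{all} edges between the large clusters rather than one per pair. If each $(k{-}1)$-sub-block carries a cluster of size $\approx L^{d(k-1)}$, the probability that no edge joins a given pair of them is $\exp\bigl(-\tfrac12\beta L^{-2dk}\cdot L^{2d(k-1)}\bigr)=\exp(-\tfrac12\beta L^{-2d})$, which is \emph{constant in $k$}; this is where the $\alpha=d$ self-similarity is actually used. The paper implements this via the map $\Phi$, which by \cref{lem:zoom} leaves $\beta$ unchanged when $\alpha=d$: after one application of $\Psi^{L^{-2d},2}$ to ensure every $2$-block contributes a vertex, it asks only that $\Phi^k[\omega']$ contain every distance-$L$ pair at each level $k\le n_0-3$, giving a per-level failure probability of order $e^{-L^{-6d}\beta}=\delta^{L^{3d}}$, and the sum over $n_0\asymp\beta$ levels is $o(\delta)$. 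This framework also makes your volume bookkeeping unnecessary: the paper decouples the volume constraint (a single Chernoff bound showing $|\eta\cap\Lambda_{n_0}|\ge(1-2\delta)L^{dn_0}$) from connectivity (the remaining events force all of $\eta\cap\Lambda_{n_0}$ into one cluster), so no deficit accumulates.
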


Before proving these lemmas, let us first see how they imply \cref{prop:alpha=d_lb}.

\begin{proof}[Proof of \cref{prop:alpha=d_lb}]
The idea is to repeatedly apply \cref{lem:alpha=d_induction} as many times as possible beginning with \cref{lem:alpha=d_base}.
 There are two constraints. First, the value of $\eps$ will eventually increase beyond $\frac{1}{2}$, at which point the hypothesis of \cref{lem:alpha=d_induction} will no longer be met. Second, our parameter, which starts at $\frac{1}{2}\beta$, will eventually increase beyond $\beta$, at which point we can no longer bound $\p_{\beta,p}$ with the current estimate. A satisfactory lower bound on the number of times that we can iterate will be $\ell := \ell(\beta)=\lceil -\log_2 (100 \delta) \rceil$, which satisfies
\[
   \frac{1}{50} \leq 2^{\ell} \delta \leq \frac{1}{100}
\]
for every $\beta \geq 1$.
We may assume that the constant $\beta_*$ is sufficiently large that $\delta(\beta)\leq 1/2$ and $\ell(\beta)\geq 1$ for every $\beta \geq \beta_*$.
Fix $\beta \geq 2 \beta_*$, and for each $0\leq r \leq \ell(\beta)$ let 
\[ \delta_r = 2^r \delta , \qquad n_r = 2^r n, \qquad  \text{ and } \qquad \beta_r= \exp\left[ 12 \delta_r \right]\frac{\beta}{2}, \]
so that $\beta/2\leq \beta_r \leq e^{0.12}\beta/2\leq \beta$ for every $0\leq r \leq \ell$ by choice of $\ell$.
We claim that
\begin{equation}
\label{eq:induction_claim}
    \p_{ \beta_r, 1 - \delta } \bra{ |K^\mathrm{max}_{n_r}| \geq (1 - 2\delta_r) L^{dn_r} } \geq 1- \delta
\end{equation}
for every $0\leq r \leq \ell$.
We proceed by induction on $r$. When $r = 0$, the result follows from \cref{lem:alpha=d_base} since $\beta_0 \geq \beta/2$. Assume that the result holds for some $r \in \{ 0,\dots,l-1 \}$. 
Since $r \leq \ell$, the definition of $\ell$ guarantees that $\delta_r \leq \frac{1}{2}$ and hence by \cref{lem:alpha=d_induction} (applied with $p=1-\delta$ and $\eps=2\delta_r$) that
\[
    \p_{  (1+12\delta_r )\beta_r , 1 - \delta } \bra{ |K^\mathrm{max}_{n_{r+1}}| \geq (1 - 4 \delta_{r}) L^{dn_{r+1}}  } \geq 1- \delta.
\]
We can therefore conclude the induction step by noting that $4 \delta_{r} = 2\delta_{r+1}$ and
\[
   (1+12\delta_r) \beta_r \ \leq e^{  12 \delta_{r} } \beta_r  =\exp[24 \delta_r] \frac{\beta}{2} = \beta_{r+1} .
\]


It remains to deduce the claimed lower bound on $\chi(\beta)$ from \eqref{eq:induction_claim}. As in the proof of \cref{prop:alpha>d_lb}, it follows from \eqref{eq:induction_claim} and transitivity that
\[
\chi(\beta) \geq L^{-dn_\ell} \E_\beta \left[|K^\mathrm{max}_{n_\ell}|^2\right] \geq (1-\delta)(1-2\delta_\ell)^2 L^{dn_\ell} \geq \frac{1}{8} L^{dn_\ell}
\]
for every $\beta \geq 2 \beta_*$. The claim follows since
\[
n_\ell =2^\ell n_0 =\Theta\left( e^{L^{-9d}\beta} \beta \right) = e^{\Theta(\beta)}
\]
as  $\beta \to \infty$ by definition of $\ell$ and $n_0$. \qedhere


\end{proof}

We now prove the two auxiliary lemmas, \cref{lem:alpha=d_induction,lem:alpha=d_base}. We begin with the inductive estimate \cref{lem:alpha=d_induction}, which is a simple consequence of \cref{lem:Psi_connectivity,lem:zoom+}.

\begin{proof}[Proof of \cref{lem:alpha=d_induction}]
We may apply \cref{lem:Psi_connectivity} as in the proof of \cref{lem:induction_alpha>d} to obtain that
\[
    \left|K_\mathrm{max}\left(\Lambda_{2n};(\eta,\omega)\right)\right| \geq (1-\eps) L^{dn} \left|K_\mathrm{max}\left(\Lambda_{n};\Psi^{1-\eps,n}(\eta,\omega)\right)\right| 
\]
for each $(\eta,\omega) \in \Omega$ and hence that
\[\mathbb{P}_{(1+6\eps)\beta, p}\left(\bigl|K^\mathrm{max}_{2n}\bigr| \geq (1-\eps)^2 L^{2dn}\right) \geq \Psi_*^{1-\eps, n} \mathbb{P}_{(1+6\eps)\beta, p}\left(\left|K^\mathrm{max}_{n}\right| \geq (1-\eps)L^{dn}\right). \]
%
Now, for $0\leq \eps \leq 1/2$ we have by calculus that $(1-\eps)^2(1+6\eps)\geq 1$ and $(1-\eps)^2 \geq 1-2\eps$, 
 and applying \cref{lem:zoom+} (with $k=n$ and $\lambda = 1-\eps$) yields that
\begin{align} \nonumber
    \p_{ (1+6\eps)\beta,p} \bra{ |K^\mathrm{max}_{2n}| \geq (1-2\eps) L^{2dn} } &\geq \p_{(1-\eps)^2(1+6\eps)\beta,p'} \bra{ |K^\mathrm{max}_n| \geq (1-\eps) L^{dn} }\\
    &\geq\p_{\beta,p'} \bra{ |K^\mathrm{max}_n| \geq (1-\eps) L^{dn} }
     \label{eq:alpha=d_key_lemma_second_bound}
\end{align}
 where 
 \[p':= \p_{(1+6\eps)\beta,p} \bra{ |K^\mathrm{max}_n| \geq (1-\eps) L^{dn}} \geq \p_{\beta,p} \bra{ |K^\mathrm{max}_n| \geq (1-\eps) L^{dn}}.\]
 If $\p_{\beta,p} \left(|K^\mathrm{max}_n|\geq (1-\eps) \abs{\Lambda_n} \right) \geq p$ then $p'\geq p$ and the claim follows immediately from \cref{eq:alpha=d_key_lemma_second_bound}.
\end{proof}

It remains finally to prove \cref{lem:alpha=d_base}.

\begin{proof}[Proof of \cref{lem:alpha=d_base}]
Fix $\beta \geq 1$. Consider a mixed configuration $(\eta,\omega) \in \Omega$. As usual, we will abuse notation to think of $\eta$ and $\omega$ as subsets of $\bbH^d_L$ and $E$ when appropriate and recall that $F_k :=\bigcup_{z \in \mathbb{H}_L^d}E_k(z)$ is the set of all unordered pairs of distinct vertices of distance at most $L^k$.
  Consider the configuration $(\eta',\omega') := \Psi^{L^{-2d},2}(\eta,\omega)$, which satisfies
\[
\eta'_x=\mathbbm{1}\Bigl(x=\pi^2(z) \text{ for some $z$ with $|K^\mathrm{max}_2(z)|\geq 1$}\Bigr)
=
\mathbbm{1}\Bigl(x=\pi^2(z) \text{ for some $z$ with $\eta \cap \Lambda_2(z) \neq \emptyset$}\Bigr).
\]
   In order for the inequality $\abs{K^\mathrm{max}_{n_0}} \geq ( 1 -2 \delta) L^{dn_0}$ to hold, it suffices that the following four conditions all hold:
\begin{enumerate}
    \item $|\eta \cap \Lambda_{n_0}|\geq (1-2\delta) L^{dn_0}$;
    \item $\omega_{xy} = 1$ for all pairs of distinct vertices $x,y \in \eta \cap \Lambda_{n_0}$ with $\|x-y\|\leq L^{2}$;
    \item $\Lambda_{n_0 - 2} \subseteq \eta'$;
    \item For each $0\leq k \leq n_0-3$, the configuration $\Phi^{k} [\omega']$ contains every pair of unordered vertices of $\Lambda_{n_0-2-k}$ with distance exactly $L$.
\end{enumerate}
Indeed, conditions 2-4 ensure that every vertex in $\eta\cap \Lambda_{n_0}$ is contained in a single $(\eta,\omega)$-cluster in $\Lambda_{n_0}$ while condition 1 ensures that this cluster has the required size.
 
 For each $1\leq i \leq 4$, let $\mathscr{A}_i$ be the event that the $i$th of these conditions holds. It suffices to prove that
\[
\P_{\frac{1}{2}\beta,1-\delta}(\mathscr{A}_i) = 1-o(\delta)
\]
for each $1\leq i \leq 4$ as $\beta \to \infty$, since this guarantees that $\P_{\frac{1}{2}\beta,1-\delta}(\cap_{i=1}^4\mathscr{A}_i)\geq 1-\delta$ when $\beta$ is sufficiently large. We bound each of these probabilities in order, and will use repeatedly that $\delta^{-2}\leq L^{dn_0} \leq L^d \delta^{-2}$ by definition of $\delta$ and $n_0$.

\begin{enumerate}
    \item 
For the event $\mathscr{A}_1$, the Chernoff bound
 \begin{multline*}
\P_{\frac{1}{2}\beta,1-\delta}(\mathscr{A}_1^c) = \mathbb{Q}_{1-\delta}\bigl(| \Lambda_{n_0}\setminus \eta|\geq 2\delta |\Lambda_{n_0}|\bigr) 
\leq e^{-2\lambda \delta L^{dn_0}} \E e^{\lambda | \Lambda_{n_0}\setminus \eta|} \\\leq e^{-2\lambda \delta L^{dn_0}} \left(1+  (e^{\lambda}-1)\delta \right)^{L^{dn_0}} \leq \exp\left[-\left(2\lambda -(e^{\lambda}-1) \right) \delta L^{dn_0}  \right]
 \end{multline*}
 holds for every $\lambda>0$, and taking $\lambda = \log 2$ yields that
 \[
\P_{\frac{1}{2}\beta,1-\delta}(\mathscr{A}_1)  \geq 1-\left(\frac{e}{4}\right)^{\delta L^{dn_0}} \geq 1- \left(\frac{e}{4}\right)^{\delta^{-1}} =1-o(\delta)
 \]
 as required.
 \item
For the event $\mathscr{A}_2$, we have the union bound
 \begin{equation*} 
   \P_{\frac{1}{2}\beta,1-\delta}(\mathscr{A}_2) = \p_{\frac{1}{2}\beta} \bra{F_2 \cap E_{n_0} \subseteq  \omega } \geq 1 - \abs{\Lambda_2}^2 \abs{\Lambda_{n_0-2}} e^{-\frac{1}{2} L^{-4d}\beta} =1-O\left(\delta^{-2}e^{-\frac{1}{2} L^{-4d}\beta}\right),
\end{equation*}
and since 
$e^{-\frac{1}{2} L^{-4d}\beta} = \delta^{\frac{1}{2}L^{5d}} \leq \delta^{16}$ it follows that 
\[\P_{\frac{1}{2}\beta,1-\delta}(\mathscr{A}_2) = 1-O(\delta^{14})=1-o(\delta)\]
as required.
\item 
For the event $\mathscr{A}_3$, it follows from \cref{lem:zoom+} and a union bound that
\[
\P_{\frac{1}{2}\beta,1-\delta}(\mathscr{A}_3) \geq 1-(1-p')L^{d(n_0-2)}
\qquad
\text{where}
\qquad
    p' := \p_{\frac{1}{2} \beta, 1-\delta } \bra{ |K^\mathrm{max}_2| \geq 1 } = 1 -\delta^{L^{2d}}
\]
and hence that
\[
\P_{\frac{1}{2}\beta,1-\delta}(\mathscr{A}_3) \geq 1-\delta^{L^{2d}} L^{d(n_0-2)} = 1-O\left(\delta^{L^{2d}-2}\right) = 1-o(\delta)
\]
as required, where in the final estimate we used that $L^{2d}-2\geq 2>1$. (We zoomed out using $\Psi$ \emph{twice} precisely to make this step work; zooming out once would not be sufficient when $d=1$ and $L\in \{2,3\}$.)

\item For the event $\mathscr{A}_4$, we will show that $\P_{\frac{1}{2}\beta,1-\delta}(\mathscr{A}_3\setminus \mathscr{A}_4)=o(\delta)$.  We have by \cref{lem:zoom+} that the conditional distribution of $\omega \cap E_{n_0-2}$ given $\mathscr{A}_3$ stochastically dominates $\P_{\beta'}$ where
$\beta'= L^{-4d} \beta$.
Since we also have by \cref{lem:zoom} that $\Phi_*^{k} \p_{\beta'} = \p_{\beta'}$ for all $k$, it follows by a union bound that
\begin{multline*}
\P_{\frac{1}{2}\beta,1-\delta}(\mathscr{A}_3\setminus \mathscr{A}_4) \leq \sum_{k=0}^{n_0-3} \P_{\beta'}(\omega \nsubseteq F_1 \cap E_{n_0-2-k})\\ \leq \sum_{k=0}^{n_0-3}|\Lambda_1|^2|\Lambda_{n_0-3-k}|e^{-L^{-2d}\beta'} = O(L^{dn_0}e^{-L^{-6d}\beta}) =O\left(\delta^{L^{3d}-2}\right) =O(\delta^6)=o(\delta).
\end{multline*}
Since we also have that $\P_{\frac{1}{2}\beta,1-\delta}(\mathscr{A}_3)=1-o(\delta)$, it follows that $\P_{\frac{1}{2}\beta,1-\delta}(\mathscr{A}_4)=1-o(\delta)$ as required.
\end{enumerate}
This concludes the proof. \qedhere
\end{proof}

\subsection{Upper bounds}
\label{sec:alpha=d_upper}

We conclude the paper with a short proof of the upper bound of \cref{thm:main} in the case $\alpha =d$, recovering a result of \cite{georgakopoulos2020percolation}.

\begin{prop} \label{prop:alpha=d_ub}
If $\alpha=d$ then there exists $C=C(d,L) <\infty$ such that 
    $\chi(\beta) \leq e^{ e^{C \beta} }$
for every $\beta \geq 1$.
\end{prop}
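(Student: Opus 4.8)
The plan is to bound $\chi(\beta)=\E_\beta|K_0|$ by iteratively zooming out one scale at a time using \cref{lem:zoom}, but keeping track of \emph{all} the edges lost at each scale rather than discarding information. At each zoom the effective parameter is multiplied by $L^{d-\alpha}=1$ (since $\alpha=d$), so the model is exactly self-similar: zooming out never changes $\beta$, and hence the only thing that grows is the accumulated contribution of the short-range edges we forget. Concretely I would set up a recursion for the quantity $\chi_n(\beta):=\E_\beta|K_0\cap \Lambda_n|$ (or better $\E_\beta|K_{\mathrm{max}}(\Lambda_n)|$), expressing the cluster of the origin inside $\Lambda_{n+1}$ in terms of the cluster of $\pi(0)$ inside $\Lambda_n$ under $\Phi[\omega]\sim\P_\beta$, together with the edges inside the $L^d$ sub-blocks that make up $\Lambda_{n+1}$. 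The key point is that the $L^d$ copies of $\Lambda_1$ forming $\Lambda_{n+1}$ each contribute a factor governed by within-$\Lambda_1$ connectivity, and the long-range edges at scale $n+1$ (those of distance exactly $L^{n+1}$) have probability $1-\exp(-\beta L^{-(d+\alpha)(n+1)})$, which decays geometrically.

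The crucial estimate is the following: if we write $m(\beta):=\E_\beta|\Lambda_1 \text{-cluster of the origin}|$, then $m(\beta)\le L^d$ trivially, and more usefully there is $C=C(d,L)$ with $m(\beta)\le e^{C\beta}$ for $\beta\ge 1$ (in fact $m(\beta)\le 1+(L^d-1)(1-e^{-\beta L^{-(d+\alpha)}})\cdot(\cdots)$; even crudely bounding each within-block edge probability by $1$ we only lose a bounded factor since $|\Lambda_1|=L^d$ is a constant). Then a one-scale recursion of the shape
\[
\chi_{n+1}(\beta) \;\le\; m(\beta)\cdot \E_\beta\big[|K_{\pi(0)}(\Phi[\omega])\cap \Lambda_n|\big] \;=\; m(\beta)\cdot \chi_n(\beta)
\]
would give $\chi_n(\beta)\le m(\beta)^n$, which is merely exponential in $n$ — not good enough, and indeed not what happens, because this bound is too generous: it double-counts nothing, so in fact the correct recursion must be more subtle and I expect the genuinely correct statement to be something like $\chi_{n+1}(\beta)\le \chi_n(\beta)\cdot (1 + c\,n\, e^{-c' n}\, \chi_n(\beta))$ coming from a union bound over the $\sim L^{(d+\alpha)n}$ long-range edges at scale $n+1$ connecting the origin's block to other blocks, each open with probability $\sim \beta L^{-(d+\alpha)(n+1)}$. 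This is precisely the ``correlation length'' mechanism of \cite[Section 4]{MR4462652}: connections at scale $n$ only become likely once $\chi_n(\beta)^2 \cdot (\text{edge probability at scale }n)$ reaches order one, i.e. once $L^{dn}$ is of order $e^{\text{const}\cdot\beta}$, after which $\chi_n$ can grow by a bounded factor per scale and doubles roughly every $O(1)$ scales thereafter — but before that threshold $\chi_n$ is essentially frozen near $1$, and the number of scales needed to \emph{reach} the threshold is $\Theta(\beta)$, at which point $\chi$ has size $e^{\Theta(\beta)}$ and the susceptibility (summing over $n$) is the exponential of that, namely $e^{e^{O(\beta)}}$.

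So the concrete plan is: (i) define the correlation-length scale $N(\beta)$ as the smallest $n$ with, say, $L^{dn}(1-e^{-\beta L^{-(d+\alpha)n}}) \ge 1$, and check $N(\beta)=O(\beta)$; (ii) for $n\le N(\beta)$, show by a direct union bound (summing the probabilities that the origin connects outside $\Lambda_n$ via a single long-range edge, using $\chi_n\le L^{dn}$) that $\chi_n(\beta)\le L^{dN(\beta)}\le e^{O(\beta)}=:M$; (iii) for $n> N(\beta)$, set up the self-improving recursion $\chi_{n+1}(\beta)\le \chi_n(\beta)(1 + C L^{(d+\alpha)n}\beta L^{-(d+\alpha)(n+1)}\chi_n(\beta)) = \chi_n(\beta)(1+C'\beta\,\chi_n(\beta))$ using \cref{lem:zoom} and transitivity, and show this forces $\chi_n(\beta)$ to at most double every bounded number of steps — but since the total cluster is $\le |\Lambda_n|=L^{dn}$ anyway and we are taking $n\to\infty$, what we really extract is a self-consistent bound $\chi(\beta)=\lim_n \chi_n(\beta)\le \exp[\exp(C\beta)]$ by solving the recursion as long as it stays finite; (iv) conclude. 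The main obstacle I anticipate is step (iii): making the union bound genuinely give a \emph{multiplicative} recursion $\chi_{n+1}\le \chi_n(1+\text{small}\cdot\chi_n)$ requires carefully decomposing $|K_0\cap\Lambda_{n+1}|$ into the part inside the origin's sub-block (contributing $\chi_n$) plus, for each other sub-block, the event that there is a direct open edge from the origin's cluster to that sub-block times the conditional expected size of that sub-block's relevant cluster — and controlling the dependencies so that this closes as a clean recursion, ideally via the BK-type or tree-graph-type inequality that \cite{MR4462652} uses; if the honest recursion turns out to be additive rather than multiplicative, one instead iterates $\chi_{n+1}\le \chi_n + (\text{stuff})\chi_n^2$ and checks the solution still grows only double-exponentially past the correlation-length scale, which it does because the coefficient $\beta L^{-(d+\alpha)}$ is a fixed constant while $L^{dn}$ caps everything.
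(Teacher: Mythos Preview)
There is a genuine gap. The recursion you write in step (iii), $\chi_{n+1}\le\chi_n(1+C'\beta\,\chi_n)$, cannot yield any finite bound on $\chi(\beta)=\lim_n\chi_n(\beta)$: iterated, it diverges doubly-exponentially in $n$, and invoking the trivial cap $\chi_n\le L^{dn}$ only gives $\chi_{n+1}/\chi_n\le 1+C'\beta L^{dn}$, hence $\chi(\beta)\le\exp(C''\beta L^{dn})\to\infty$. Your edge count is also off: the relevant scale-$(n{+}1)$ edges from the cluster of $0$ to the rest of $\Lambda_{n+1}$ number $\sim|K_0\cap\Lambda_n|\cdot L^{dn}$, each open with probability $\sim\beta L^{-2d(n+1)}$, so a BK/tree-graph bound actually gives $\chi_{n+1}\le\chi_n+C\beta L^{-dn}\chi_n\,\chi_{n+1}$, i.e.\ $\chi_{n+1}\le\chi_n/(1-C\beta L^{-dn}\chi_n)$ when the denominator is positive. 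This \emph{is} essentially the Duminil--Copin--Tassion inequality $\chi\le\chi_S/(1-\phi_\beta(S))$ with $S=\Lambda_n$ and $\phi_\beta(\Lambda_n)\asymp\beta L^{-dn}\chi_n$, but it is useful only once $C\beta L^{-dn}\chi_n\le\tfrac{1}{2}$ at \emph{some} scale $n$. The trivial bound $\chi_n\le L^{dn}$ gives only $C\beta L^{-dn}\chi_n\le C\beta$, useless for large $\beta$, and your step (ii) supplies nothing better.

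The paper provides the missing input via an exploration argument absent from your plan. One records the sequence $n_0=0<n_1<\cdots$ where $n_{i+1}$ is the largest scale reached by a single open edge out of $\Lambda_{n_i}$; self-similarity ($\alpha=d$ means $\Phi_*\P_\beta=\P_\beta$) makes the increments stochastically dominated by an i.i.d.\ sequence $(X_i)$ with $\P(X_1=0)=e^{-L^{-d}\beta}$ and $\E X_1=O(\beta)$. The exploration therefore halts after a geometric number of steps of mean $e^{L^{-d}\beta}$, reaching total scale $e^{O(\beta)}$, and Markov's inequality yields $\P_\beta(0\leftrightarrow\Lambda_n^c)\le C\beta^{-2}$ once $n\gtrsim\beta^4 e^{L^{-d}\beta}$. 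This directly forces $\chi_n=o(L^{dn}/\beta)$ at that scale, the DCT bound then closes, and one gets $\chi(\beta)\le 2L^{dn}=e^{e^{O(\beta)}}$. Note in particular that the correlation-length scale is $n(\beta)=e^{\Theta(\beta)}$, not $\Theta(\beta)$ as your heuristic in step (i) suggests: your threshold $L^{dn}(1-e^{-\beta L^{-2dn}})\ge 1$ in fact identifies $n=O(\log\beta)$, off from the truth by a double exponential.
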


We will prove \cref{prop:alpha=d_ub} by proving an equivalent upper bound on the \emph{correlation length}  $\xi(\beta)$ as defined in \cite[Section 4]{MR4462652}.
Following Duminil-Copin and Tassion \cite{duminil2015new}, for each  $\beta\geq 0$, and finite subset $S \subseteq \bbH^d_L$  containing the origin we consider the quantity
\[
\phi_\beta(S) =\phi_\beta(S,0) := \sum_{y\notin S}\sum_{x\in S} \Bigl(1-e^{-\beta \|x-y\|^{-d-\alpha}}\Bigr)\P_\beta(0 \xleftrightarrow{S} x),
\]
where we write $\{0 \xleftrightarrow{S} x\}$ to mean that $0$ and $x$ are connected by an open path all of whose vertices belong to $S$. It is a straightforward consequence of the BK inequality as explained in \cite[Lemma 4.2]{MR4462652} that
%
\[
\sum_{x\in S'} \P_\beta(0 \xleftrightarrow{S'} x) \leq \sum_{x\in S} \P_\beta(0 \xleftrightarrow{S} x) + \phi_\beta(S) \cdot \sup_{u\in S'} \sum_{x\in S'} \P_\beta(u \xleftrightarrow{S'} x)
\]
for every $\beta\geq 0$ and every pair of finite sets $S \subseteq S' \subseteq \bbH^d_L$. As such, if $\phi_\beta(S)<1$ then we may take the limit as $S'$ exhausts $\bbH^d_L$ to obtain that
\[
\sum_{x\in S} \P_\beta(0 \xleftrightarrow{S} x) \leq \chi(\beta)\leq \frac{1}{1-\phi_\beta(S)}\sum_{x\in S} \P_\beta(0 \xleftrightarrow{S} x).
\]
For each $n \geq 0$ we define
\[
\beta_n = \sup\left\{\beta \geq 0 : \phi_\beta(\Lambda_n,0) \leq \frac{1}{2} \right\} \qquad \text{ and } \qquad \beta^*_n = \max_{0 \leq m \leq n} \beta_m,
\]
so that $(\beta_n^*)_{n\geq 0}$ is a non-decreasing sequence.
For each $0\leq \beta<\beta_c$ we define the \textbf{correlation length} $\xi(\beta)$ by
\begin{equation}
\label{eq:correlation_length_definition}
\xi(\beta) = L^{n(\beta)} \qquad \text{ where } \qquad  n(\beta)=\inf\{n\geq 0 : \beta \leq \beta_n^*\},
\end{equation}
which has the property that the global susceptibility $\chi(\beta)$ is within a factor of two of the expected number of points that are connected to $0$ within the ball of radius $\xi(\beta)$:
\begin{equation}
\label{eq:correlation_length_susceptibility}
\sum_{x\in \Lambda_{n(\beta)}} \P_\beta(0\xleftrightarrow{\Lambda_{n(\beta)}} x ) \leq \chi(\beta) \leq 2 \sum_{x\in \Lambda_{n(\beta)}} \P_\beta(0\xleftrightarrow{\Lambda_{n(\beta)}} x ).
\end{equation}
(Note that this estimate holds for every $\alpha>0$.)
Since the right hand side is trivially at most $2|\Lambda_{n(\beta)}|=2 \xi(\beta)^d$, \cref{prop:alpha=d_ub} follows from \eqref{eq:correlation_length_susceptibility} and the following proposition.

\begin{prop}
\label{prop:alpha=d_correlation_length} If $\alpha =d$ then there exist constants $c=c(d,L)>0$ and $C=C(d,L)<\infty$ such that the correlation length satisfies 
$e^{e^{c\beta}}\leq \xi(\beta) \leq e^{e^{C\beta}}$
 for every $\beta \geq 1$.
\end{prop}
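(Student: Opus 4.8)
The plan is to prove the two inequalities $e^{e^{c\beta}}\le\xi(\beta)$ and $\xi(\beta)\le e^{e^{C\beta}}$ separately. The lower bound needs no new work: combining \cref{prop:alpha=d_lb} with the right-hand inequality of \eqref{eq:correlation_length_susceptibility} and the trivial bound $\sum_{x\in\Lambda_{n(\beta)}}\P_\beta(0\xleftrightarrow{\Lambda_{n(\beta)}}x)\le|\Lambda_{n(\beta)}|=\xi(\beta)^d$ yields $\xi(\beta)^d\ge\tfrac12\chi(\beta)\ge\tfrac12 e^{e^{c_0\beta}}$, and taking $d$-th roots gives $\xi(\beta)\ge e^{e^{c\beta}}$ for a suitable $c=c(d,L)>0$ and all large $\beta$.

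For the upper bound, I would first reduce to a statement about the $\phi_\beta(\Lambda_m,0)$. Since $(\beta_n^*)_{n\ge0}$ is non-decreasing and $\beta\mapsto\phi_\beta(\Lambda_m,0)$ is continuous and strictly increasing with $\{\beta:\phi_\beta(\Lambda_m,0)\le\tfrac12\}=[0,\beta_m]$, the definition of $n(\beta)$ gives $n(\beta)\le m$ whenever $\phi_\beta(\Lambda_m,0)\le\tfrac12$; hence it is enough to exhibit, for each $\beta\ge1$, an integer $m\le e^{C\beta}/\log L$ with $\phi_\beta(\Lambda_m,0)\le\tfrac12$, since then $\xi(\beta)=L^{n(\beta)}\le L^m\le e^{e^{C\beta}}$. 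To handle $\phi_\beta(\Lambda_m,0)$ I would exploit the ultrametric: because $\|x-y\|=\|y\|$ for all $x\in\Lambda_m$ and $y\notin\Lambda_m$, one has
\[
\phi_\beta(\Lambda_m,0)=h_m(\beta)\,\chi(\Lambda_m,\beta),\qquad h_m(\beta):=\sum_{y\notin\Lambda_m}\bigl(1-e^{-\beta\|y\|^{-2d}}\bigr),\qquad\chi(\Lambda_m,\beta):=\sum_{x\in\Lambda_m}\P_\beta(0\xleftrightarrow{\Lambda_m}x),
\]
and a short computation shows that $h_m(\beta)$ lies within a factor $2$ of $\beta L^{-d(m+1)}$ in the relevant range $\beta\le L^{d(m+1)}$ (which in particular contains the pairs $(\Lambda_n,\beta_n)$).

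The heart of the matter is to control the growth of $\beta_n$, and the goal would be an inductive increment of the shape $\beta_{n+1}\ge\beta_n+c_1 e^{-c_2\beta_n}$, with $c_1,c_2>0$ depending only on $d,L$, valid for all large $n$. This is in the spirit of the sprinkled renormalisation of \cref{sec:alpha=d_lower}, but now exploiting that raising the scale by one \emph{multiplies} the available volume by $L^d$: the Duminil--Copin--Tassion inequality displayed above, applied with $S=\Lambda_n\subseteq\Lambda_{n+1}=S'$, gives $\chi(\Lambda_{n+1},\beta_n)\le 2\chi(\Lambda_n,\beta_n)$ because $\phi_{\beta_n}(\Lambda_n,0)\le\tfrac12$, the renormalisation map $\Phi$ of \cref{sec:renormalization} together with the self-similarity $\Phi_*\P_\beta=\P_\beta$ (valid for $\alpha=d$) lets one compare the cluster structure at scale $n+1$ with that at scale $n$, and the one event that genuinely obstructs pushing $\beta$ up is a sub-block of $\Lambda_{n+1}$ sending no open edge at all to the rest of $\Lambda_{n+1}$, which by self-similarity has probability exactly $\exp(-\beta(L^d-1)L^{-2d})\ge e^{-c_2\beta}$ uniformly in $n$. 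Granting this increment, iteration gives $e^{c_2\beta_n}\gtrsim n$, so $\beta_n\ge c_2^{-1}\log(c_1c_2 n)$ for $n$ large, hence $\beta\le\beta_n^*$ and thus $\phi_\beta(\Lambda_n,0)\le\tfrac12$ for some $n\le\lceil e^{C\beta}/\log L\rceil$ as soon as $C$ exceeds $c_2$, which finishes the upper bound.

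I expect the inductive increment $\beta_{n+1}\ge\beta_n+c_1 e^{-c_2\beta_n}$ to be the main obstacle. It amounts to making quantitative the soft fact that $\chi(\beta)<\infty$ (equivalently $\phi_\beta(\Lambda_m,0)\to0$ for fixed $\beta<\beta_c=\infty$), and the subtlety is that at the correlation-length scale the finite-volume susceptibility $\chi(\Lambda_n,\beta_n)=\tfrac1{2h_n(\beta_n)}$ is \emph{large}, of order $L^{dn}/\beta_n$; consequently the crude differential inequality $\tfrac{d}{d\beta}\chi(\Lambda_n,\beta)\le 2L^{-d}\chi(\Lambda_n,\beta)^2$ only permits increasing $\beta$ by an amount of order $\beta_n L^{-dn}$, which is far too small to be iterated. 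Extracting the correct, exponentially-small-in-$\beta$ increment therefore forces one to combine $\Phi$, the Duminil--Copin--Tassion inequality, and the explicit isolation estimate more delicately than any of them permits alone; this is exactly where the techniques of \cite[Section~4]{MR4462652} come in. (Alternatively one could simply quote the double-exponential upper bound on $\chi$ from \cite{georgakopoulos2020percolation} and substitute it into $\phi_\beta(\Lambda_m,0)=h_m(\beta)\chi(\Lambda_m,\beta)\le\beta L^{-d(m+1)}\chi(\beta)$ to conclude, but the correlation-length argument is preferred for self-containedness.)
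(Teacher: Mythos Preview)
Your lower bound is correct and identical to the paper's: both simply combine \cref{prop:alpha=d_lb} with $\chi(\beta)\le 2\xi(\beta)^d$.

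For the upper bound, your strategy is genuinely different from the paper's, and the step you yourself flag as the main obstacle --- the increment $\beta_{n+1}\ge\beta_n+c_1e^{-c_2\beta_n}$ --- is not actually established in your proposal. You correctly observe that the isolation probability of a sub-block is $\exp(-\beta(L^d-1)L^{-2d})$ uniformly in $n$, and you correctly note that the crude differential inequality for $\chi(\Lambda_n,\beta)$ is too weak; but you do not explain how the isolation event, the Duminil--Copin--Tassion inequality, and the renormalisation map $\Phi$ combine to yield the increment, and the reference to ``the techniques of \cite[Section~4]{MR4462652}'' is not a substitute for an argument. As it stands, the upper bound is a heuristic outline with an explicit missing step rather than a proof.

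The paper sidesteps this difficulty entirely. Instead of tracking the sequence $(\beta_n)$, it bounds $\P_\beta(0\leftrightarrow\Lambda_n^c)$ directly via an exploration argument: define $n_0=0$ and let $n_{i+1}$ be the largest scale such that some open edge joins $\Lambda_{n_i}$ to $\Lambda_{n_{i+1}}\setminus\Lambda_{n_{i+1}-1}$ (setting $n_{i+1}=n_i$ if no such edge exists). By self-similarity (\cref{lem:zoom} with $\alpha=d$), the increments $n_{i+1}-n_i$ are stochastically dominated by i.i.d.\ variables $X_i$ with $\P(X_1>k)=1-\exp(-\beta L^{-d(k+1)})$, so that $\P(X_1=0)=e^{-L^{-d}\beta}$ and $\E X_1\le\frac{L^{-d}}{1-L^{-d}}\beta$. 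The exploration therefore terminates after a geometric number of steps of mean $e^{L^{-d}\beta}$, each contributing $O(\beta)$ on average, and Markov's inequality gives $\P_\beta(0\leftrightarrow\Lambda_n^c)\le C\beta^{-2}$ once $n\ge C_1\beta^4e^{L^{-d}\beta}$. Feeding this into the bound $\phi_\beta(\Lambda_n)\le C\beta L^{-dn}\sum_{x\in\Lambda_n}\P_\beta(0\xleftrightarrow{\Lambda_n}x)$ (essentially your factorisation) and splitting the sum at $n/2$ shows $\phi_\beta(\Lambda_n)\le C\beta^{-1}\le\tfrac12$ for such $n$, whence $n(\beta)=e^{O(\beta)}$. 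This exploration argument is short and self-contained, and uses the isolation probability $e^{-L^{-d}\beta}$ in a much more transparent way than your proposed increment would: it is precisely the probability that the exploration halts at a given step.
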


\begin{proof}
The lower bound follows from \cref{prop:alpha=d_lb} since $\chi(\beta)\leq 2\xi(\beta)^d$; it remains to prove the upper bound. We begin by bounding $\P_\beta(0\leftrightarrow \Lambda_n^c)$ for appropriately large $n$ using an exploration argument. 
Define a random sequence $(n_i)_{i\geq 0}$ by setting $n_0=0$ and recursively setting $n_{i+1}$ to be maximal such that there is an open edge connecting $\Lambda_{n_i}$ to $\Lambda_{n_{i+1}}\setminus \Lambda_{n_{i+1}-1}$, taking $n_{i+1}=n_i$ if there are no open edges incident to $\Lambda_{n_i}$. We define $\tau$ to be the minimal $i$ such that $n_{i+1}=n_i$, so that $0\leftrightarrow \Lambda_n^c$ only if $n_\tau>n$. For each $i\geq 0$ let $\mathcal{F}_i$ be the $\sigma$-algebra generated by $n_0,\ldots,n_i$. Since we can compute $n_0,\ldots,n_i$ in such a way that we only reveal edges with at least one endpoint in $\Lambda_{n_i}$ and any revealed edge with an endpoint in $\Lambda_{n_i}^c$ is closed, we have that
\begin{align*}
\P_\beta(n_{i+1}> n_i+k \mid \mathcal{F}_i) &\leq \P_\beta(\text{there is an open edge connecting $\Lambda_{n_i}$ and $\Lambda_{n_i+k}^c$})\\
&=\P_\beta(\text{there is an open edge connecting $\Lambda_{0}$ and $\Lambda_{k}^c$})
\end{align*}
almost surely for each $i,k\geq 0$, where the final equality follows from \cref{lem:zoom} (where $L^{d-\alpha}\beta=\beta$ since $\alpha=d$).
Letting $X_1,X_2,\ldots$ be i.i.d.\ random variables with distribution
\[
\P(X_1 > k) = \P_\beta(\text{there is an open edge connecting $\Lambda_{0}$ and $\Lambda_{k}^c$})
\]
and letting $T=\min\{i:X_i =0\}$, it follows that
\[
\P(n_\tau \geq n) \leq \P\left(\sum_{i=1}^T X_i \geq n\right).
\]
Now, we can compute that
\begin{align*}
\P(X_1 > k)
  &= \P_\beta(\text{there is an open edge connecting $\Lambda_{0}$ and $\Lambda_{k}^c$}) 
  \\&= 1-\exp\left[-\beta\sum_{\ell=k+1}^\infty  L^{-2d\ell} (L^{d\ell}-L^{d(\ell-1)}) \right]
  = 1-\exp\left[-\beta L^{-d(k+1)}\right]
\end{align*}
so that 
$\P(X_1=0)=\exp\left[-L^{-d}\beta \right]$
and
\[
\E X_1 = \sum_{k=0}^\infty \P(X_1 >k) = \sum_{k=0}^\infty 1-\exp\left[-\beta L^{-d(k+1)}\right] \leq \beta \sum_{k=0}^\infty L^{-d(k+1)} = \frac{L^{-d}}{1-L^{-d}}\beta.
\] 
Thus, we have by Markov's inequality that
\[
\P_\beta(0\leftrightarrow \Lambda_n^c) \leq \P_\beta(n_\tau \geq n) \leq \P(T \geq t) + \P\left(\sum_{i=1}^t X_i \geq n\right) \leq (1-e^{-L^{-d}\beta})^t + \frac{L^{-d}}{1-L^{-d}} \frac{\beta t}{n}
\]
for every $n,t\geq 1$. Taking $t=\lceil \beta^{-3} n \rceil$, we deduce that there exist constants $C_1$ and $C_2$ such that if $n \geq C_1 \beta^4 e^{L^{-d} \beta} $ then
\begin{equation}
\label{eq:escape_large_ball}
\P_\beta(0\leftrightarrow \Lambda_n^c) \leq (1-e^{-L^{-d}\beta})^{\lceil \beta^{-3} n \rceil} + O(\beta^{-2}) \leq  C_2 \beta^{-2}.
\end{equation}
To complete the proof, we note (using that $1-e^{-\beta \|x-y\|^{-2d}} \leq \beta \|x-y\|^{-2d}$) that there exists a constant $C_3$ such that
\[
\phi_\beta(\Lambda_n) \leq \sum_{y\in \Lambda_n^c}\sum_{x\in \Lambda_n} \beta \|x-y\|^{-2d} \P_\beta(0 \xleftrightarrow{\Lambda_n} x) \leq C_3 \beta L^{-dn} \sum_{x\in \Lambda_n} \P_\beta(0 \xleftrightarrow{\Lambda_n} x)
\]
for every $n\geq 1$ and that
\[
 \sum_{x\in \Lambda_n} \P_\beta(0 \xleftrightarrow{\Lambda_n} x) \leq 1+ \sum_{k=0}^n L^{d(k+1)}\P_\beta(0\leftrightarrow \Lambda_k^c),  
\]
so that there exist constants $C_4$ and $C_5$ such that if $n \geq 2C_1 \beta^4 e^{L^{-d} \beta}$ then 
\begin{align*}
\sum_{x\in \Lambda_n} \P_\beta(0 \xleftrightarrow{\Lambda_n} x) &\leq
1+\sum_{k=0}^{\lfloor n/2 \rfloor} L^{d(k+1)} +  \sum_{k=\lceil n/2 \rceil}^{n} L^{d(k+1)} C_2 \beta^{-2} \\&\leq 1+
 \sqrt{L^{d(n+1)}}+C_4 \beta^{-2} L^{dn} \leq C_5 \beta^{-2} L^{dn},
\end{align*}
where we used that $\lceil n/2 \rceil \geq C_1 \beta^4 e^{L^{-d} \beta}$ to apply \eqref{eq:escape_large_ball} in the first inequality.
It follows that if $n\geq 2C_1 \beta^4 e^{L^{-d} \beta}$ then
$\phi_\beta(\Lambda_n) \leq C_3 C_5 \beta^{-1}$,
which is less than $1/2$ when $\beta$ is sufficiently large. This implies that 
$n(\beta) = O\left(\beta^4 e^{L^{-d} \beta}\right)=e^{O(\beta)}$
and hence that $\xi(\beta)=e^{e^{O(\beta)}}$ as required. \qedhere
\end{proof}

\begin{proof}[Proof of \cref{prop:alpha=d_ub}]
The claim follows immediately from \cref{prop:alpha=d_correlation_length} and the inequality $\chi(\beta)\leq 2\xi(\beta)^d$.
\end{proof}

\begin{proof}[Proof of \cref{thm:main}]
The case $\alpha>d$ follows from \cref{lem:alpha>d_ub,prop:alpha>d_lb} while the case $\alpha=d$ follows from \cref{prop:alpha=d_lb,prop:alpha=d_ub}.
\end{proof}

\subsection*{Acknowledgements} This work was carried out as part of Caltech's Summer Undergraduate Research Fellowship (SURF) program 2022, during which JK was mentored by PE and TH. During the research, JK was also supported by an NSERC USRA. We thank Louigi Addario-Berry and Johannes B\"aumler for helpful comments on a draft.

\setstretch{1}
\footnotesize{
\bibliographystyle{abbrv}
\bibliography{unimodularthesis.bib} 

\begin{thebibliography}{10}

\bibitem{MR894398}
M.~Aizenman, D.~J. Barsky, and R.~Fern\'andez.
\newblock The phase transition in a general class of {I}sing-type models is
  sharp.
\newblock {\em J. Statist. Phys.}, 47(3-4):343--374, 1987.

\bibitem{aizenman1988discontinuity}
M.~Aizenman, J.~Chayes, L.~Chayes, and C.~Newman.
\newblock Discontinuity of the magnetization in one-dimensional $1/|x- y|^2$
  {I}sing and {P}otts models.
\newblock {\em Journal of Statistical Physics}, 50(1):1--40, 1988.

\bibitem{MR3969983}
R.~Bauerschmidt, D.~C. Brydges, and G.~Slade.
\newblock {\em Introduction to a renormalisation group method}, volume 2242 of
  {\em Lecture Notes in Mathematics}.
\newblock Springer, Singapore, 2019.

\bibitem{baumler2022behavior}
J.~B{\"a}umler.
\newblock Behavior of the distance exponent for $1/|x-y|^{2d}$ long-range
  percolation.
\newblock {\em arXiv preprint arXiv:2208.04793}, 2022.

\bibitem{baumler2022distances}
J.~B{\"a}umler.
\newblock Distances in $1/|x- y|^{2d}$ percolation models for all dimensions.
\newblock {\em arXiv preprint arXiv:2208.04800}, 2022.

\bibitem{baumler2022isoperimetric}
J.~B{\"a}umler and N.~Berger.
\newblock Isoperimetric lower bounds for critical exponents for long-range
  percolation.
\newblock {\em arXiv preprint arXiv:2204.12410}, 2022.

\bibitem{bleher1987critical}
P.~Bleher and P.~Major.
\newblock Critical phenomena and universal exponents in statistical physics. on
  {D}yson's hierarchical model.
\newblock {\em The Annals of Probability}, pages 431--477, 1987.

\bibitem{dawson2013percolation}
D.~Dawson and L.~Gorostiza.
\newblock Percolation in an ultrametric space.
\newblock {\em Electronic Journal of Probability}, 18:1--26, 2013.

\bibitem{ding2013distances}
J.~Ding and A.~Sly.
\newblock Distances in critical long range percolation.
\newblock {\em arXiv preprint arXiv:1303.3995}, 2013.

\bibitem{dragovich2017p}
B.~Dragovich, A.~Y. Khrennikov, S.~Kozyrev, I.~Volovich, and E.~Zelenov.
\newblock p-adic mathematical physics: the first 30 years.
\newblock {\em P-Adic numbers, ultrametric analysis and applications},
  9(2):87--121, 2017.

\bibitem{dragovich2009p}
B.~Dragovich, A.~Y. Khrennikov, S.~V. Kozyrev, and I.~V. Volovich.
\newblock On p-adic mathematical physics.
\newblock {\em P-Adic Numbers, Ultrametric Analysis, and Applications},
  1(1):1--17, 2009.

\bibitem{duminil2020long}
H.~Duminil-Copin, C.~Garban, and V.~Tassion.
\newblock Long-range models in 1d revisited.
\newblock {\em arXiv preprint arXiv:2011.04642}, 2020.

\bibitem{duminil2015new}
H.~Duminil-Copin and V.~Tassion.
\newblock A new proof of the sharpness of the phase transition for {B}ernoulli
  percolation and the {I}sing model.
\newblock {\em Comm. Math. Phys.}, 343(2):725--745, 2016.

\bibitem{dyson1969existence}
F.~J. Dyson.
\newblock Existence of a phase-transition in a one-dimensional {I}sing
  ferromagnet.
\newblock {\em Communications in Mathematical Physics}, 12(2):91--107, 1969.

\bibitem{edwards1988generalization}
R.~G. Edwards and A.~D. Sokal.
\newblock Generalization of the fortuin-kasteleyn-swendsen-wang representation
  and monte carlo algorithm.
\newblock {\em Physical review D}, 38(6):2009, 1988.

\bibitem{georgakopoulos2020percolation}
A.~Georgakopoulos and J.~Haslegrave.
\newblock Percolation on an infinitely generated group.
\newblock {\em Combinatorics, Probability and Computing}, 29(4):587--615, 2020.

\bibitem{MR2243761}
G.~Grimmett.
\newblock {\em The random-cluster model}, volume 333 of {\em Grundlehren der
  mathematischen Wissenschaften [Fundamental Principles of Mathematical
  Sciences]}.
\newblock Springer-Verlag, Berlin, 2006.

\bibitem{1901.10363}
T.~Hutchcroft.
\newblock New critical exponent inequalities for percolation and the random
  cluster model.
\newblock {\em Probab. Math. Phys.}, 1(1):147--165, 2020.

\bibitem{hutchcroft2021critical}
T.~Hutchcroft.
\newblock The critical two-point function for long-range percolation on the
  hierarchical lattice.
\newblock {\em arXiv preprint arXiv:2103.17013}, 2021.

\bibitem{hutchcroft2022critical}
T.~Hutchcroft.
\newblock Critical cluster volumes in hierarchical percolation.
\newblock {\em arXiv preprint arXiv:2211.05686}, 2022.

\bibitem{MR4462652}
T.~Hutchcroft.
\newblock On the derivation of mean-field percolation critical exponents from
  the triangle condition.
\newblock {\em J. Stat. Phys.}, 189(1):Paper No. 6, 33, 2022.

\bibitem{MR4504407}
T.~Hutchcroft.
\newblock Sharp hierarchical upper bounds on the critical two-point function
  for long-range percolation on {$\Bbb{Z}^d$}.
\newblock {\em J. Math. Phys.}, 63(11):Paper No. 113301, 18, 2022.

\bibitem{koval2012long}
V.~Koval, R.~Meester, and P.~Trapman.
\newblock Long-range percolation on the hierarchical lattice.
\newblock {\em Electronic Journal of Probability}, 17:1--21, 2012.

\bibitem{ouboter2016stochastic}
T.~Ouboter, R.~Meester, and P.~Trapman.
\newblock Stochastic {SIR} epidemics in a population with households and
  schools.
\newblock {\em Journal of Mathematical Biology}, 72(5):1177--1193, 2016.

\bibitem{sawyer1983isolation}
S.~Sawyer and J.~Felsenstein.
\newblock Isolation by distance in a hierarchically clustered population.
\newblock {\em Journal of Applied Probability}, 20(1):1--10, 1983.

\end{thebibliography}
}

\end{document}